\newtheorem{theorem}{Theorem}[section]
\newtheorem{thm}{Theorem}[section]
\newtheorem{prop}[theorem]{Proposition}
\newtheorem{lemma}[theorem]{Lemma}
\theoremstyle{definition}
\numberwithin{equation}{section}
\newcommand{\R}{\ensuremath{\mathbb{R}}}
\newcommand{\Z}{\ensuremath{\mathbb{Z}}}
\newcommand{\N}{\ensuremath{\mathbb{N}}}
\newcommand{\MM}{\ensuremath{\mathcal{M}}}
\newcommand{\NN}{\ensuremath{\mathcal{N}}}
\newcommand{\1}{\ensuremath{\mathbf{1}}}
\newcommand{\ls}{\ensuremath{\lesssim}}
\newcommand{\gs}{\ensuremath{\gtrsim}}
\newcommand{\tr}{\ensuremath{\triangle}}
\newcommand{\subs}{\ensuremath{\subseteq}}
\newcommand{\eps}{\ensuremath{\varepsilon}}
\newcommand{\la}{\ensuremath{\lambda}}
\newcommand{\al}{\ensuremath{\alpha}}
\newcommand{\be}{\ensuremath{\beta}}
\newcommand{\ga}{\ensuremath{\gamma}}
\newcommand{\de}{\ensuremath{\delta}}
\newcommand{\si}{\ensuremath{\sigma}}
\newcommand{\om}{\ensuremath{\omega}}
\newcommand{\eq}{\begin{equation}
\newcommand{\ee}{\end{equation}}}
\numberwithin{equation}{section}
\begin{document}
\title{A Roth-type theorem for dense subsets of $\R^d$}
\author{Brian Cook\quad\'Akos Magyar\quad Malabika Pramanik}
\thanks{2010 Mathematics Subject Classification. 05D10, 42B20.\\
The first author is supported by NSF grant DMS1147523. The second and third authors are respectively supported by grant ERC-AdG. 321104
and an NSERC Discovery grant.}
\begin{abstract} Let $1 < p < \infty$, $p\neq 2$. We prove that if
  $d\geq d_p$ is sufficiently large, and $A\subs\R^d$ is a measurable
  set of positive upper density then there exists $\la_0=\la_0(A)$
  such for all $\la\geq\la_0$ there are $x,y\in\R^d$ such that
  $\{x,x+y,x+2y\}\subs A$ and $|y|_p=\la$, where $||y||_p=(\sum_i
  |y_i|^p)^{1/p}$ is the $l^p(\mathbb R^d)$-norm of a point
  $y=(y_1,\ldots,y_d)\in\R^d$. This means that dense subsets of $\R^d$
  contain 3-term progressions of all sufficiently large gaps when the
  gap size is measured in the $l^p$-metric. This statement is known to
  be false in the Euclidean $l^2$-metric as well as in the $l^1$ and
  $\ell^{\infty}$-metrics. One of the goals of this note is to
  understand this phenomenon. A distinctive feature of the proof is
  the use of multilinear singular integral operators, widely studied
  in classical time-frequency analysis, in the estimation of forms
  counting configurations. 
\end{abstract}
\maketitle

%%%%%%%%%%%%%%%%%%%%%%%%%%%%%%%%%%%%%%%%%%%%%%%%%%%%%%%%%%%%%%%%%%%%%%%%%%%%%%%%%%%%%%%%

\section{Introduction.} A main objective of Ramsey theory is
the study of geometric configurations in large but otherwise arbitrary
sets. A typical problem in this area reads as follows: given a set
$S$, a family $\mathcal F$ of subsets of $S$ and a positive integer
$r$, is it true that any $r$-colouring of $S$ yields some
monochromatic configuration from $\mathcal F$? More precisely, for any
partition of $S = S_1 \cup \cdots \cup S_r$ into $r$ subsets, does
there exist $i \in \{1, 2, \cdots, r\}$ and $F \in \mathcal F$ such
that $F \subseteq S_i$? In discrete (respectively Euclidean) Ramsey
theory $S$ is generally $\mathbb Z^d$ (respectively $\mathbb R^d$),
and sets in $\mathcal F$ are geometric in nature. For example, if $X$
is a fixed finite subset of $\mathbb R^d$, such as a collection of equally spaced
collinear points or vertices of an isosceles right triangle, then $\mathcal F =
\mathcal F(X)$ could be the collection of all isometric copies or all
homothetic copies of $X$ in $S$. A colouring theorem refers to a
choice of $S$ and $\mathcal F$ for which the answer to the above-mentioned question is
yes. Such theorems are often
consequences of sharper, more quantitative statements known as density
theorems. A fundamental result with $S = \mathbb N = \{1, 2, \cdots
\}$ is Szemer\'edi's theorem \cite{Sz}, which states that if $E \subseteq
\mathbb N$ has positive upper density, i.e., \[ \limsup_{N \rightarrow
\infty} \frac{|E \cap \{1, \cdots, N\}|}{N} > 0, \] then $E$ contains
a $k$-term arithmetic progression for every $k$. This in particular
implies van der Waerden's theorem \cite{{vdW},{GRS}}, which asserts that given $r
\geq 1$, any $r$-colouring of $\mathbb N$ must produce a $k$-term
monochromatic progression, i.e., a homothetic copy of $\{1, 2, \cdots,
k\}$.
\vskip0.2in
In this note, we will be concerned with strong density theorems in $S
= \mathbb R^d$, with similar implications to colouring problems. A
basic and representative result in the field
states that for any $d \geq 2$, a set $A\subs\R^d$ of
positive upper Banach density contains all large distances. i.e., for
every sufficiently large $\la\geq\la_0(A)$ there are points $x,x+y\in
A$ such that $||y||_2=\la$. Recall that the positive upper Banach
density of $A$ is defined as
\[\bar{\de}(A):=\limsup_{N\to\infty}\sup_{x\in\R^d} \frac{|A\cap (x+[0,N]^d)|}{N^d}.\]
Replacing $\limsup$ in the above definition with $\liminf$ leads to
the definition of the lower Banach density
$\underline{\delta}(A)$. The result quoted above was obtained independently, along with various
generalizatons, by a number of authors, for example Furstenberg,
Katznelson and Weiss \cite{FKW}, Falconer and Marstrand
\cite{FM}, and Bourgain \cite{Bo}. In the notation of the previous
paragraph, this result addresses the situation
where $X$ is a two-point set and $\mathcal F(X)$ is enlarged to allow for
expansions as well. The conclusion ensures existence of configurations in a very
strong sense, namely for {\em{all sufficiently large}} dilates of $X$ instead
of merely {\em{some}}.
\vskip0.2in
A natural question to ask is whether similar statements exist that involve a
configuration with a greater number of points. If one looks for some large dilate of a given
configuration, such results are well-known in the
discrete regime of the integer lattice, under suitable assumptions of largeness on the
underlying set. These results can often be translated to existence
of configurations in the Euclidean setting as well. For instance,
Roth's theorem \cite{Ro} in the integers states that a subset of $\Z$
of positive upper density contains a three-term arithmetic progression
$\{x,x+y,x+2y\}$ and it easily implies that a measurable set $A\subs\R$ of
positive upper density contains a three-term progression whose gap size can
be arbitrarily large. Results ensuring all sufficiently large dilates
of a configuration in a set of positive Banach density are typically
more difficult. Bourgain \cite{Bo} shows that if $X$ is any non-degenerate
$k$-point simplex in $\mathbb R^d$, $d \geq k \geq 2$ (i.e. $X$ spans
a $(k-1)$-dimensional space), then any subset of $\mathbb R^d$ of
positive upper Banach density contains a congruent copy of $\lambda X$
for all sufficiently large $\lambda$.
\vskip0.2in
On the other hand, a simple example given in
\cite{Bo} shows that there is a set $A\subs\R^d$ in any dimension $d
\geq 1$, such that the gap lengths of all 3-progressions in $A$ do
not contain all sufficiently large numbers. In other words, the
result of \cite{Bo} is false for the degenerate
configuration $X = \{0, e_1, 2e_1 \}$, where $e_1$ is the canonical
unit vector in the $x_1$-direction. More precisely, the
counterexample provided in \cite{Bo} is the set $A$ of points
$x\in\R^d$ such that $|||x||_2^2-m|\leq\frac{1}{10}$ for some $m\in
\N$. The parallelogram identity $2||y||_2^2 = ||x||_2^2 + ||x +
2y||_2^2 - 2 ||x+y||_2^2$
then dictates that $\bigl|||y||_2^2- \frac{\ell}{2} \bigr|\leq \frac{4}{10}$ (for
some $\ell\in\N$) for any progression $\{x,x+y,x+2y\}\subs A$. Thus
the squares of the gap lengths are restricted to lie close to the
half-integers, and therefore cannot realize all sufficiently large
numbers.
\vskip0.2in
The counterexample above has an interesting connection with a result
in Euclidean Ramsey theory due to Erd\"os et al \cite{EGMRSS}. Let us recall \cite{Gr}
that a finite point set $X$ is said to be {\em{Ramsey}} if for every
$r \geq 1$, there exists $d = d(X,r)$ such that any $r$-colouring of
$\mathbb R^d$ contains a congruent copy of $X$. A result in \cite{EGMRSS}
states that every Ramsey configuration $X$ is spherical, i.e., the
points in $X$ lie on an Euclidean sphere. (The converse statement is
currently an open conjecture due to Graham \cite{Gr}). Since a set of three
collinear points is non-spherical, it is natural to ask whether
Bourgain-type counterexamples exist for any non-spherical $X$. This
question was posed by Furstenberg and answered in the affirmative by Graham
\cite{Gr}. We state his result below for convenience.
\begin{thm}[Graham \cite{Gr}] \label{graham}
Let $X$ be a finite non-spherical set. Then for any $d \geq 2$, there
exists a set $A \subseteq \mathbb R^d$ with $\bar{\delta}(A) > 0$ and
a set $\Lambda \subset \mathbb R$ with $\underline{\delta}(\Lambda) >
0$ so that $A$ contains no congruent copy of $\lambda X$ for any
$\lambda \in \Lambda$.
\end{thm}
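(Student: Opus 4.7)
I would mimic and extend the Bourgain shell construction quoted above. The idea is to extract from non-sphericality a linear identity among the squared norms $\|w_i\|_2^2$ that holds for every congruent copy $\{w_1,\ldots,w_k\}$ of $\lambda X$, and then to choose $A$ so that this identity forbids $\lambda$ on a set $\Lambda$ of positive lower Banach density. The starting point is the algebraic criterion: expanding $\|v_i-c\|_2^2=r^2$ shows that $X=\{v_1,\ldots,v_k\}$ is spherical iff there exist $a\in\R^d$ and $b\in\R$ with $\|v_i\|_2^2=2\langle a,v_i\rangle+b$ for every $i$. Hence non-sphericality furnishes real coefficients $c_1,\ldots,c_k$, not all zero, satisfying
\[
\sum_i c_i=0,\qquad \sum_i c_i v_i=0,\qquad C:=\sum_i c_i\|v_i\|_2^2\neq 0.
\]

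\textbf{The key identity.} Any congruent copy of $\lambda X$ has the form $w_i=x_0+\lambda Rv_i$ for some $x_0\in\R^d$ and an orthogonal $R$. Expanding
\[
\|w_i\|_2^2=\|x_0\|_2^2+2\lambda\langle x_0,Rv_i\rangle+\lambda^2\|v_i\|_2^2
\]
and applying the two vanishing sums from the previous step to cancel the $\|x_0\|^2$ and cross terms yields
\[
\sum_{i=1}^k c_i\|w_i\|_2^2=\lambda^2 C.
\]
This is the direct generalization of Bourgain's parallelogram identity, which corresponds to $(c_i)=(1,-2,1)$ and $C=2$.

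\textbf{Construction of $A$ and $\Lambda$.} Take $A=\{x\in\R^d:\|x\|_2^2\in\alpha\Z+(-\epsilon,\epsilon)\}$ for parameters $\alpha,\epsilon>0$; integration in spherical coordinates confirms $\bar{\delta}(A)>0$. If $\{w_i\}\subseteq A$, then writing $\|w_i\|_2^2=\alpha n_i+O(\epsilon)$ with $n_i\in\Z$ and inserting into the identity gives
\[
\lambda^2 C=\alpha\sum_i c_i n_i+O\Bigl(\epsilon\sum_i|c_i|\Bigr).
\]
In the favourable case that the $c_i$ can be chosen rational (hence, after clearing denominators, integral), the sum $\sum c_i n_i$ ranges over a discrete subgroup $\gamma\Z\subset\R$, so $\lambda^2$ is trapped in a set of the form $(\alpha\gamma/|C|)\Z+(-\epsilon',\epsilon')$. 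Its complement is a periodic union of intervals of positive density in $\R_+$, and its preimage $\Lambda$ under $\lambda\mapsto\lambda^2$ satisfies $\underline{\delta}(\Lambda)>0$, meeting the requirements.

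\textbf{Main obstacle.} The central subtlety is that the null space of the $k\times(d+1)$ matrix with rows $(v_i,1)$ may fail to contain any nonzero rational vector, as already happens for three collinear points $\{0,e_1,\alpha e_1\}$ with $\alpha$ irrational: there $\sum c_i\Z$ is dense in $\R$ and the naive shell construction imposes no restriction on $\lambda^2$. Overcoming this requires tailoring $A$ to the Diophantine properties of $(c_i)$---for example, replacing $\alpha\Z$ with a sparser one-dimensional set $E$ engineered so that the signed Minkowski sum $\sum_i c_iE$ has complement of positive density, or, when the null space has dimension at least two, superposing several quadratic constraints simultaneously. This Diophantine analysis is, in my view, the technical heart of the argument.
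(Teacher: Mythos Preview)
The paper does not prove this theorem. It is stated with attribution to Graham \cite{Gr} and used only as background motivation; no argument for it appears anywhere in the text. So there is no ``paper's own proof'' to compare your proposal against.

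That said, your outline captures exactly the classical strategy behind this result: extract from non-sphericality a nontrivial affine relation $\sum c_i=0$, $\sum c_i v_i=0$, $C=\sum c_i\|v_i\|_2^2\neq 0$, deduce the scale-homogeneous identity $\sum c_i\|w_i\|_2^2=\lambda^2 C$ for any congruent copy of $\lambda X$, and then obstruct this with a shell set. Your derivation of the identity is correct, and the rational-coefficient case is handled cleanly.

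You have also put your finger on the genuine difficulty. When the $c_i$ cannot be taken rational---as in your example $X=\{0,e_1,\alpha e_1\}$ with $\alpha$ irrational---the set $\sum_i c_i\Z$ is dense, and the shell construction $A=\{x:\|x\|_2^2\in\alpha\Z+(-\epsilon,\epsilon)\}$ imposes no restriction on $\lambda$. This is not a cosmetic issue: one can check in that example that for every $\lambda$ the identity is approximately solvable with integers of controlled size, and the corresponding configuration is realizable in $\R^d$ for $d\geq 2$ once the base point is taken far enough from the origin. So the naive construction genuinely fails, and your instinct that one must tailor $E$ (or superpose constraints) to the Diophantine structure of the $c_i$ is correct. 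Since the paper gives no argument here, resolving this step would require consulting Graham's original paper or the Erd\H{o}s--Graham--Montgomery--Rothschild--Spencer--Straus work; your proposal, as written, is a sound plan with an honestly flagged, substantive gap rather than a complete proof.
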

It is interesting to observe that while Bourgain's counterexample
prevents an existence theorem for three term arithmetic progressions
of all sufficiently large {\em{Euclidean}} gaps, it does not exclude the validity of such a result when
the gaps are measured using some other metric on $\R^d$ that does
not obey the parallelogram law. The aim of this note is to prove that such results
do indeed exist  for the $l^p$ metrics $||y||_p:=(\sum_{i=1}^d
|y_i|^p)^{1/p}$ for all $1 < p < \infty$, $p\neq 2$. In this
sense, a counterexample as described above is more the exception rather
than the rule.
\vskip0.2in
Variations of our arguments also work for other
metrics given by specific classes of positive homogeneous polynomials of degree at least
4 and those generated by symmetric convex bodies with special
structure. Results of the first type were obtained in the finite field setting by
the first two authors \cite{CM}. Also, the arguments here can be
applied to obtain similar results for certain other degenerate point
configurations. We hope to pursue these extensions elsewhere.

\section{Main results}

\begin{thm}\label{thm2.1} Let $1 < p < \infty$, $p\neq 2$. Then there
  exists a constant $d_p \geq 2$ such that for $d\geq d_p$ the following
  holds. Any measurable set $A\subs \R^d$ of positive upper Banach
  density contains a three-term arithmetic progression  $\{x,x+y,x+2y\}\subs A$ with gap $||y||_p=\la$ for all sufficiently large $\la\geq \la(A)$.
\end{thm}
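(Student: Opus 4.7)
The plan is to estimate the trilinear counting form
\[
\La_\la(f,g,h) := \int_{\R^d}\int_{S^p_\la} f(x)\,g(x+y)\,h(x+2y)\,d\si_\la(y)\,dx,
\]
where $S^p_\la := \{y\in\R^d:\|y\|_p=\la\}$ carries its natural surface measure $d\si_\la$. After a standard reduction that localizes $A$ inside a large cube $Q$ on which its density is at least $\de:=\bar{\de}(A)/2$, the theorem reduces to showing $\La_\la(\1_A,\1_A,\1_A)\gs c(\de)|Q|\la^{d-1}$ for all sufficiently large $\la$. I would split $\1_A = f_\sharp + f_\flat$, with $f_\sharp := \1_A * \psi_\eps$ a low-frequency regularization at a fixed scale $\eps$, expand $\La_\la(\1_A,\1_A,\1_A)$ multilinearly, and handle the diagonal main term $\La_\la(f_\sharp,f_\sharp,f_\sharp)$ separately from the seven mixed error terms involving $f_\flat$.

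Passing to the Fourier side gives the representation
\[
\La_\la(f,g,h)=\int\!\!\int \wh{f}(-\xi-\eta)\,\wh{g}(\xi)\,\wh{h}(\eta)\,\wh{d\si_\la}(\xi+2\eta)\,d\xi\,d\eta,
\]
and since $\wh{d\si_\la}(0)=|S^p_\la|\sim\la^{d-1}$ while $\wh{f_\sharp}$ is essentially supported where $|\xi|\ls 1/\eps$, a short calculation provides the desired lower bound on the diagonal term. For the mixed terms one needs a bound on $\La_\la$ when at least one factor is high-frequency. Away from a narrow neighbourhood of the coordinate axes, the $l^p$-unit sphere has non-vanishing Gauss curvature when $p\neq 2$, and stationary phase furnishes decay
\[
|\wh{d\si_\la}(\zeta)|\ls \la^{d-1}(1+\la|\zeta|)^{-\al(p,d)}
\]
with $\al(p,d)\to\infty$ as $d\to\infty$. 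Choosing $d\geq d_p$ sufficiently large, this decay, combined with Plancherel and Cauchy--Schwarz, is enough to absorb the bulk contribution into the main term.

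The main obstacle, and the reason the multilinear singular integral machinery advertised in the abstract must appear, is the contribution from small spherical caps around each coordinate axis $e_i$: there the Gauss curvature of $S^p_1$ degenerates when $p>2$, while $S^p_1$ fails to be $C^2$ when $1<p<2$, and the Fourier decay above is unavailable. A purely volumetric bound cannot succeed on these caps, since this is precisely where the arithmetic obstruction underlying Bourgain's Euclidean counterexample for $p=2$ resides. The plan to handle this regime is to parametrize each cap near $e_i$ by the transverse coordinates $y' = (y_j)_{j\neq i}$, using the equation $\|y\|_p=\la$ to express $y_i = y_i(y')$ as a smooth function, and to recognise the restricted trilinear form as a multilinear singular integral acting along the $e_i$-direction: after a suitable change of variables the averaging against $d\si_\la$ produces a principal-value kernel in one variable, and uniform bounds for bilinear Hilbert transform type operators from classical time-frequency analysis then yield the required gain. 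The case $p=2$ is genuinely excluded at this step, for only then do the three relevant phases align along a common direction, restoring the parallelogram identity that drives Bourgain's example.
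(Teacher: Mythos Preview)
Your architecture --- decompose $\1_A=f_\sharp+f_\flat$ at a fixed spatial scale, extract the main term from the smooth piece via the value $\wh{d\si_\la}(0)$, and control the remaining seven terms by Fourier decay of $\wh{d\si_\la}$ plus a cap analysis --- does not match the paper's and has real gaps. For the main term: with $\eps$ fixed and $\la\to\infty$, the Fourier support of $f_\sharp$ lies in $\{|\xi|\ls 1/\eps\}$, so $\la|\xi+2\eta|$ is of order $\la/\eps$, not bounded, and $\wh{d\si_\la}(\xi+2\eta)$ is nowhere near $\wh{d\si_\la}(0)$; no short calculation gives the lower bound. For the error terms: even granting curvature away from the axes, the multiplier acts through the single combination $\xi+2\eta$, which can vanish while the individual frequencies are large, so Plancherel and Cauchy--Schwarz do not convert decay of $\wh{d\si_\la}$ into a gain on the trilinear form. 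This is exactly the obstruction that forces genuinely bilinear analysis.

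The paper's route is structurally different. It never decomposes $\1_A$; instead it decomposes the \emph{weight}, writing $\NN_\la=c_1(\eps)\MM_\la+[\NN_\la-\MM^\eps_\la]+\mathcal E_\la$, where $\MM_\la$ replaces $d\si_\la$ by a smooth bump $\om_\la$ at scale $\la$. The main term $\MM_\la(\1_A)\geq c(\de)N^d$ comes from Roth's theorem on the torus (Proposition~\ref{prop2.1}). The bracket is bounded by $N^d\|\si-\om^\eps\|_{U^3}$ via repeated Cauchy--Schwarz (Lemma~\ref{lem4.1}), and this $U^3$ norm is small not because of curvature of $S^p_1$ but because $e^{it|y|^p}=\prod_j e^{it|y_j|^p}$ factorizes, reducing matters to a one-dimensional oscillatory integral whose third-difference phase $y^p+(y+k+l)^p-(y+k)^p-(y+l)^p$ has nonvanishing derivative exactly when $p\neq 1,2$ (Lemma~\ref{lem4.2}). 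Finally --- and this is where the time-frequency input actually lives --- there is no useful bound on a single $\mathcal E_\la$. One argues by contradiction through the finitary Theorem~\ref{thm2.2}: if a lacunary sequence $\la_1<\cdots<\la_J$ of bad gaps existed, then $\sum_j|\mathcal E_{\la_j}(f)|^2\leq C_\eps N^{2d}$ uniformly in $J$, because $\sum_j k_j^\eps(y)k_j^\eps(z)$ is a Calder\'on--Zygmund kernel on $\R^{2d}$ and the Muscalu--Tao--Thiele theorem applies (Proposition~\ref{tf-prop}, Lemma~\ref{lem5.1}). The multilinear singular integral therefore controls a lacunary square function summed over many scales, not a spherical cap at a single scale; your proposed use of the bilinear Hilbert transform on individual caps is not how the machinery enters.
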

\vskip0.2in
{\em{Remarks:}}
\begin{enumerate}[(a)]
\item The result is sharp in the range of $p$. Easy variants of the example in \cite{Bo} show
that Theorem \ref{thm2.1} and in fact even the two-point results of \cite{{FKW}, {Bo},
  {FM}} cannot be true for $p = 1$ and $p =
\infty$. Indeed, if $A = \mathbb Z^d + \epsilon_0 [-1, 1]^d$ for some
small $\epsilon_0 > 0$, then
on one hand $A$ is of positive upper Banach density. On the other hand, if
$x, x+y \in A$ for some $y \ne 0$, then both $||y||_{\infty}$ and $||y||_1$ are
restricted to lie within distance $O(\epsilon_0)$ from some positive integer.
\vskip0.1in
\item Indeed, counterexamples similar to \cite{Bo} and the above can be constructed
  for norms given by a symmetric, convex body, a nontrivial part of whose
  boundary is either flat or coincides with an $l^2$-sphere. An appropriate
  formulation of a positive result for a general norm, and indeed the
  measuring failure of the parallelogram law for such norms, remains
  an interesting open question.   
\vskip0.1in   
\item We do not know whether the $p$-dependence of the dimensional threshold $d_p$
stated in the theorem is an artifact of our proof. In our analysis,
$d_p$ grows without bound as $p
\nearrow \infty$, while other implicit constants involved in the proof
blow up near $p = 1$ and $p = 2$. See in particular Proposition \ref{prop2.2}
and Lemma
\ref{lem4.2}. It would be of interest to determine whether Theorem
\ref{thm2.1} holds for all $d \geq 2$ for the specified values of
$p$. It is worth pointing out that certain colouring theorems work
only in high enough dimensions, but we are currently  unaware of any lower
dimensional phenomena that might preclude an analogue of Theorem
\ref{thm2.1} for small $d$.
\vskip0.1in 
\item Since three collinear points do not lie on an $l^p$-sphere for
  any $p \in (1, \infty)$, Theorem \ref{thm2.1} shows that a result of the type Theorem
  \ref{graham} is in general false for an $l^p$-sphere if $p \ne
 1, 2, \infty$. Thus any connection between Ramsey-like properties and the notion
  of sphericality appears to be a purely $l^2$ phenomenon.
\end{enumerate}

\subsection{Overview of proof} We describe below the main elements of
the proof. Details will be provided in subsequent sections.
\vskip0.2in
Our main observation is a stronger finitary version of Theorem
\ref{thm2.1} for bounded measurable sets.
\begin{thm}\label{thm2.2} Let $1 < p < \infty$, $p\neq 2$. Let $d\geq
  d_p$, $\de>0$ and let $N\geq N(\de)$ be sufficiently large. Then for
  any measurable set $A\subs [0,N]^d$ of measure $|A|\geq \de\,N^d$
  the following holds.

For any lacunary sequence $1<\la_1<\ldots <\la_J\ll N$ with
$\la_{j+1}\geq 2\la_j$ and $J\geq J(\de)$, there exists a three term
arithmetic progression $\{x,x+y,x+2y\}\subs A$ such that $||y||_p=\la_j$ for some $1\leq j\leq J$,
\end{thm}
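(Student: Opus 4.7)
Let $f=\1_A$ and let $d\si_\la$ denote surface measure on the $\ell^p$-sphere $\{y\in\R^d:||y||_p=\la\}$, normalized so that its total mass is comparable to $\la^{d-1}$. Introduce the trilinear counting form
$$\Lambda_\la(f)\;=\;\int_{\R^d}\!\!\int_{\R^d} f(x)\,f(x+y)\,f(x+2y)\,d\si_\la(y)\,dx.$$
Existence of a three-term arithmetic progression $\{x,x+y,x+2y\}\subs A$ with $||y||_p=\la$ is equivalent to $\Lambda_\la(f)>0$. The plan is to establish a quantitative lower bound $\sum_{j=1}^J \Lambda_{\la_j}(f)>0$ whenever $J\geq J(\de)$, since then at least one summand must be positive, producing the desired progression at the corresponding scale $\la_j$.

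To produce such a lower bound, decompose $d\si_\la=d\widetilde{\si}_\la+d\si_\la^\sharp$, where $d\widetilde{\si}_\la$ is a smoothed approximation (an average over a thin $\ell^p$-annulus about the sphere, mollified on a scale $\eps\la$ for a small fixed $\eps=\eps(\de)$) and $d\si_\la^\sharp$ is the oscillatory remainder. The corresponding smoothed trilinear form $\widetilde{\Lambda}_\la(f)$ counts three-term APs in $A$ whose gap lies in an annular neighborhood of $\la$. Using a continuous analog of Roth's theorem combined with a Koopman--von Neumann-type decomposition of $f$ into a structured and a Gowers-uniform component --- or, alternatively, Bourgain-style pigeonholing over scales --- one obtains a uniform lower bound
$$\widetilde{\Lambda}_\la(f)\;\geq\; c(\de)\,N^d\,\la^{d-1}$$
for every $\la$ in the given lacunary range. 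Summing over $j$ produces a main term of size $\gs c(\de)\,J\,N^d\,\la^{d-1}$.

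The heart of the argument --- and the main obstacle --- is to control the error
$$\sum_{j=1}^J \int_{\R^d}\!\!\int_{\R^d} f(x)\,f(x+y)\,f(x+2y)\,d\si^\sharp_{\la_j}(y)\,dx$$
by a quantity independent of $J$. Here the hypothesis $p\neq 2$ is essential: on the $\ell^p$-sphere with $p\in(1,\infty)\setminus\{2\}$, the Gauss curvature is non-degenerate on a sufficiently large portion of the sphere once $d\geq d_p$, and consequently $\wh{\si}_\la(\xi)$ satisfies Fourier decay estimates distinct from those available on the Euclidean sphere --- decay that remains effective in the oblique directions forced on us by the trilinear phase $\xi+2\eta$ appearing after Plancherel. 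Exploiting the lacunarity of $\{\la_j\}$, the error sum can then be recast, on the Fourier side, as a trilinear singular integral operator of Calder\'on--Zygmund type whose kernel is built from the Littlewood--Paley-like pieces $\si^\sharp_{\la_j}$. A multilinear $T(1)$ theorem and paraproduct decompositions from time-frequency analysis --- in the spirit of the techniques developed for the bilinear Hilbert transform --- should then yield a bound for this operator that is independent of $J$. Combining with the main term and taking $J\geq J(\de)$ large enough forces $\sum_j \Lambda_{\la_j}(f)>0$, completing the proof.
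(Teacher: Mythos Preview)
Your overall architecture is broadly the same as the paper's: a main term handled by a Roth-type lower bound, and an error controlled by multilinear singular integral machinery of the bilinear-Hilbert-transform type, with lacunarity used to sum over scales. Two points, however, need correcting.

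\medskip

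\textbf{The role of $p\neq 2$ is misidentified.} You attribute the restriction $p\neq 2$ to curvature properties of the $\ell^p$-sphere and resulting Fourier decay of $\widehat{\sigma}_\lambda$. This is backwards: the Euclidean sphere ($p=2$) has nondegenerate curvature everywhere and the \emph{best} stationary-phase decay, whereas $\ell^p$-spheres for $p\neq 2$ have curvature degenerating (or blowing up) near the coordinate axes. The paper's mechanism is algebraic, not geometric. What is actually used is that the iterated difference
\[
y^p + (y+k+l)^p - (y+k)^p - (y+l)^p
\]
has nonvanishing $y$-derivative when $p\neq 1,2$, so that a one-dimensional oscillatory integral (the $U^3$-norm of $e^{it|y|^p}$ restricted to an orthant) decays in $t$. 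For $p=2$ this phase is the constant $2kl$, there is no oscillation, and the estimate fails --- which is exactly consistent with Bourgain's parallelogram-law counterexample. A curvature argument would not distinguish $p=2$ from nearby $p$.

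\medskip

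\textbf{The decomposition needs an intermediate scale.} Your single split $\sigma_\lambda = \widetilde{\sigma}_\lambda + \sigma^\sharp_\lambda$ puts the singular measure $\sigma_\lambda$ into the kernel you want to run through the multilinear Calder\'on--Zygmund machinery. The paper instead uses a three-term decomposition
\[
\mathcal N_\lambda = c_1(\eps)\,\mathcal M_\lambda \;+\; \bigl[\mathcal N_\lambda - \mathcal M^\eps_\lambda\bigr] \;+\; \mathcal E_\lambda,
\]
corresponding to a wide smooth bump $\omega_\lambda$, a thin smooth bump $\omega^\eps_\lambda$, and the singular $\sigma_\lambda$. The first term gives the Roth lower bound. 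The middle bracket is made small \emph{per scale} by the $U^3$-norm estimate $\|\sigma-\omega^\eps\|_{U^3}\lesssim \eps^{d\gamma_p-1}$ (this is where $p\neq 2$ enters). Only the last term $\mathcal E_\lambda$, built from the \emph{smooth} difference $\omega^\eps_\lambda - c_1(\eps)\omega_\lambda$ with mean zero, is summed over $j$ via the multilinear theorem of Muscalu--Tao--Thiele; smoothness is needed so that the multiplier obeys the symbol estimates $|\partial_\xi^\alpha m(\xi)|\lesssim \mathrm{dist}(\xi,\Gamma')^{-|\alpha|}$. Finally, the paper squares and sums $\sum_j|\mathcal E_{\lambda_j}(f)|^2\leq C_\eps N^{2d}$ (via Cauchy--Schwarz and a 4-linear form), then argues by contradiction, rather than bounding $\bigl|\sum_j \mathcal E_{\lambda_j}\bigr|$ directly as you propose.
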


\begin{proof}[Proof of Theorem \ref{thm2.1}] Theorem \ref{thm2.2} implies Theorem \ref{thm2.1}. Indeed, assume that
Theorem \ref{thm2.1} does not hold. Then there exists an infinite
sequence $\{\la_j\}_{j=1}^\infty\subs\N$ such that $\la_j\neq ||y||_p$ for
any $j$ and any $y$ which is the gap of a 3-progression contained in
$A$. Without loss of generality the sequence may be assumed to be
lacunary, i.e. $\lambda_{j+1} \geq 2 \lambda_j$ for all $j$. After setting
$\delta = \overline{\delta(A)}/2$, fix any sufficiently large
$J=J(\de)$ and any sufficiently large box $B_N$ of size
$N=N(\de,\la_J)$ on which the density of $A$ is $|A|/N^d\geq\de$. By
translation invariance we may assume $B_N=[0,N]^d$, contradicting
Theorem \ref{thm2.2}.
\end{proof}
\vskip0.2in

\subsubsection{A counting function and its variants} For the rest of the paper we fix a finite exponent $p>1$, $p\neq 2$, and
for simplicity of notation write $|y| = ||y||_p$.
We start by counting three term arithmetic progressions
$P=\{x,x+y,x+2y\}$ contained in $A$ via a positive measure $\si_\la$
supported on the $l^p$-sphere $S_\la=\{y\in\R^d;\
|y|=\la\}$. Let $f:=\1_A$ be the indicator function of a measurable set $A\subs [0,N]^d$.
As is standard in enumerating configurations, we introduce the
counting function
\eq\label{2.1}
\mathcal{N}_\la(f):=\int_{\R^d}\int_{S_\la} f(x)f(x+y)f(x+2y)\,d\si_\la(y)\,dx.
\ee
Clearly if $\NN_\la(f)>0$ then $A$ must contain a 3-progression
$x,x+y,x+2y$ with $|y|=\la$. We will define the measure $\si_\la$ via the oscillatory integral
\begin{equation} \label{2.2}
\si_\la(y):=\,\la^{-d+p}\int_\R e^{i\,(|y|^p-\la^p)t}\,dt.
\end{equation}
It is well-known (see \cite{Ho}, Ch.2) that the above oscillatory integral
defines an absolutely continuous measure with respect to the surface
area measure on $S_\la$ whose density function is $|\nabla Q(y)|^{-1}$
with $Q(y)=|y|^p$. The normalizing factor $\la^{-d+p}$ is inserted to
ensure that $\si_\la (S_\la)=\si_1(S_1)>0$, which is independent of $\la$.\\

Let $\psi$ be a Schwarz function such that $0\leq\psi\leq 1$,
$\psi(0)=1$ and $\widehat{\psi}\geq 0$ is compactly supported. Define the quantity
\eq\label{2.3}
\om_\la(y):=\la^{-d+p}  \int_\R e^{it\,(|y|^p-\la^p)} \psi(\la^pt)\,dt.
\ee
Note that by scaling
\begin{equation} \label{omega-scaling}
  \om_\la(y)=\la^{-d}\om(y/\la)=\la^{-d}\,\widehat{\psi}\,(\,|y/\la|^p-1\,),\end{equation}
hence $\omega_{\lambda}$ is compactly supported on $B(0;C\lambda)$
with
\[\int \om_\la(y)\,dy=\int \om(y)\,dy=C_\om>0.\]
When the subscript is omitted it should be assumed that $\la=1$. Also define
\eq\label{2.4}
\MM_\la(f):=\int_{\R^d}\int_{\R^d} f(x)f(x+y)f(x+2y)\,\om_\la(y)\,dy\,dx.
\ee
The first step is to show that this quantity is large.
\begin{prop}\label{prop2.1} Let $0<\de\leq 1$ and let $A \subseteq
  [0,N]^d$ be such that $|A| \geq \delta N^d$.
% $f:[0,N]^d\to [0,1]$ be such that $\int f \geq \de N^d$.
Then there exists a constant $c(\de)>0$ depending only on $\de$ such that for $0<\la\ll N$,
\eq\label{2.5} \MM_\la(\mathbf 1_A)\,\geq\,c(\de)\,N^d.\ee
\end{prop}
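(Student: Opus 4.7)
The argument is a reduction to a one-dimensional Varnavides theorem via slicing, using the positivity of the weight $\om_\la$.

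After substituting $z = y/\la$, so that $\om_\la(y)\,dy = \om(z)\,dz$, we have
\[
\MM_\la(f) = \int_{\R^d} \om(z)\,T(\la z)\,dz, \qquad T(y) := \int_{\R^d} f(x) f(x+y) f(x+2y)\,dx.
\]
The weight $\om(z) = \widehat\psi(|z|_p^p - 1)$ is continuous and nonnegative, and attains the positive value $\widehat\psi(0)$ everywhere on the unit $l^p$-sphere $\{|z|_p = 1\}$. By compactness there exist $\kappa_0 > 0$ and an open annular neighborhood $\Omega_0$ of the sphere on which $\om \geq \kappa_0$. Hence it suffices to prove
\[
\int_{\Omega_0} T(\la z)\,dz \geq c(\de)\,N^d \quad \text{for } \la \ll N.
\]

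I would establish this by polar decomposition. For each direction $\hat v \in S^{d-1}$, the radial section $I_{\hat v} := \{r > 0 : r\hat v \in \Omega_0\}$ is an interval whose endpoints and length are bounded above and below uniformly in $\hat v$. Decomposing $x = \tilde x + s\hat v$ with $\tilde x \in \hat v^\perp$, the inner 3-AP count factors as
\[
T(\la r \hat v) = \int_{\hat v^\perp} \int_\R g_{\tilde x}(s) g_{\tilde x}(s + \la r) g_{\tilde x}(s + 2\la r)\,ds\,d\tilde x, \qquad g_{\tilde x}(s) := f(\tilde x + s\hat v).
\]
A Fubini-plus-pigeonhole argument produces a subset $\mathcal{G}_{\hat v} \subset \hat v^\perp$ of measure $\geq c(\de)\,N^{d-1}$ on which each slice $g_{\tilde x}$ has density at least $\de/2$ on its supporting segment of length comparable to $N$. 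For each such dense slice, a one-dimensional Varnavides theorem with gap restriction counts at least $c(\de)\,N\,|I_{\hat v}|$ three-term progressions with common difference in the interval $\la I_{\hat v}$ (after the substitution $s' = \la r$ absorbing the Jacobian). Integrating over $\mathcal{G}_{\hat v}$ and then over $\hat v \in S^{d-1}$ against $r^{d-1}\,dr\,d\sigma(\hat v)$, and using the uniform lower bounds on $r$ and $|I_{\hat v}|$, yields the required estimate.

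The main technical ingredient is the one-dimensional Varnavides theorem with a restricted gap: for $E \subseteq [0,N]$ of density at least $\de/2$ and any interval $[\la_1, \la_2] \subset (0, N/4]$ with $\la_2/\la_1$ bounded, the count of triples $(t, t+s, t+2s) \in E^3$ with $s \in [\la_1, \la_2]$ is at least $c(\de)\,N(\la_2 - \la_1)$. This follows from Roth's theorem via pigeonholing over sub-intervals at the scale of the restricted gap, and is the only step with genuine combinatorial content; the dependence on $\de$ is the main delicacy, while every other step in the reduction is routine.
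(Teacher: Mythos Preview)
Your reduction to the annular integral $\int_{\Omega_0} T(\lambda z)\,dz \geq c(\delta)N^d$ is where the argument breaks down: this bound is false in general. Take $A = [0,\delta^{1/d}N]^d \subseteq [0,N]^d$, so $|A| = \delta N^d$. Any three-term progression in $A$ has gap $y$ with $|y_i| \leq \delta^{1/d}N/2$ for every $i$, hence $|y|_p \leq d^{1/p}\delta^{1/d}N/2$. For $\delta$ small enough (depending only on $d,p$) and $\lambda$ an absolute fraction of $N$ (which the proposition permits), every $z \in \Omega_0$ has $|\lambda z|_p \approx \lambda$ exceeding this bound, so $T(\lambda z) = 0$ throughout $\Omega_0$ and your annular integral vanishes. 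The proposition is still true for this $A$ because $\omega(z) = \widehat{\psi}(|z|_p^p - 1)$ is also positive near $z = 0$, and $T(\lambda z) \approx \delta N^d$ there; the mass of $\mathcal{M}_\lambda$ comes entirely from small $z$, not from the annulus.

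Relatedly, your one-dimensional ``Varnavides with gap restriction'' is false as stated. Let $E \subseteq [0,N]$ be the $P$-periodic union of intervals of length $\delta P/2$, so $|E| = (\delta/2)N$. Then $T_1(s) = \int 1_E(t)1_E(t+s)1_E(t+2s)\,dt$ vanishes whenever $s \bmod P$ lies outside a neighbourhood of $0$ of radius $O(\delta P)$. Choosing $[\lambda_1,\lambda_2] = [P/4,P/2]$ gives $\lambda_2/\lambda_1 = 2$, $\lambda_2 \leq N/4$, yet $\int_{\lambda_1}^{\lambda_2} T_1(s)\,ds = 0$. The proposed derivation ``pigeonhole into sub-intervals at scale $\lambda$ and apply Roth'' only produces progressions with gap $\leq \lambda/2$ inside each sub-interval, not with gap in $[\lambda_1,\lambda_2]$.

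The paper's proof avoids both issues by exploiting the positivity of $\omega$ near the origin rather than near the sphere: it partitions $[0,N]^d$ into boxes of side $\ell = c\lambda$ with $c$ small, pigeonholes to find many boxes on which $A$ has density $\geq \delta/2$, and applies Roth's theorem on the torus $\Pi^d$ to each such box. This counts progressions with gap $|y| \leq \ell \ll \lambda$, where $\omega_\lambda(y) = \lambda^{-d}\widehat{\psi}(|y/\lambda|^p - 1) \geq \lambda^{-d}/10$ since $\widehat{\psi}$ is chosen positive near $-1$.
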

As we will see in Section \ref{main term section}, the proof of this
proposition is in essence
Roth's theorem adapted to the Euclidean setting (see \cite{Bo}).
\vskip0.2in

Next, we define a variant of $\mathcal M_{\lambda}$ indexed by a small $\eps>0$ which is a good approximation to $\NN_\la(f)$. Let
\eq\label{2.6}
\om^\eps_\la (y):=\la^{-d+p}\int_\R e^{it\,(|y|^p-\la^p)} \psi(\eps\la^p t)\,dt.
\ee
It is easy to see that
\eq\label{2.7}
\om^\eps_\la(y)=\la^{-d}\eps^{-1}\widehat{\psi}\left(\frac{|y/\la|^p-1}{\eps}\right)=\la^{-d}\om^\eps(y/\la).
\ee
Define
\eq\label{2.8}
\MM^\eps_\la(f):=\int_{\R^d}\int_{\R^d} f(x)f(x+y)f(x+2y)\,\om^\eps_\la(y)\,dx\,dy.
\ee
We establish the error estimate
\begin{prop}\label{prop2.2} Let $f:[0,N]^d\to [-1,1]$ and let $\
  0<\eps<1$. Then there exist constants $\ga=\ga_p>0$ and $C_{p,d} >
  0$ both independent of $\lambda$ such that for $0<\la\ll N$,
\eq\label{2.9}
|\NN_\la(f)-\MM^\eps_\la(f)|\leq C_{p,d} \eps^{d\ga_p-1}\,N^d.
\ee
In particular, $\gamma_p$ is independent of $d$. The constant $C_{p,d} \nearrow \infty$ as $p \rightarrow 1$ or $2$,
while $\gamma_p \rightarrow 0$ as $p \nearrow \infty$.
\end{prop}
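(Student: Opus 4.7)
The plan is to use the oscillatory integral representations in \eqref{2.2} and \eqref{2.6} to reduce the estimate to a decay bound on a single-variable integral. Subtracting and applying Fubini,
\[
\NN_\la(f) - \MM^\eps_\la(f) = \la^{-d+p}\int_\R \bigl(1 - \psi(\eps\la^p t)\bigr)\,e^{-it\la^p}\,H(t)\,dt,
\]
where $H(t) := \int h(y)\,e^{it|y|^p}\,dy$ and $h(y) := \int f(x)f(x+y)f(x+2y)\,dx$ is continuous, bounded by $N^d$, and supported in a box of side $O(N)$. Taking $\psi$ even (which one may, since $\widehat\psi\geq 0$ is compactly supported, so symmetrizing is harmless), $|1-\psi(\eps\la^p t)|=O((\eps\la^p t)^2)$ near $t=0$, so the $t$-integrand is effectively concentrated in $|t|\gtrsim (\eps\la^p)^{-1}$.

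The core step is to establish the pointwise estimate $|H(t)|\leq C_{p,d}\,N^d\,|t|^{-d/p}$ for $|t|\geq 1$. The decisive feature is the tensor decomposition $|y|^p=\sum_i |y_i|^p$, which factors the oscillatory kernel as $e^{it|y|^p}=\prod_i e^{it|y_i|^p}$. The relevant 1D building block is
\[
J(t,\eta):=\int_\R e^{i(t|s|^p-s\eta)}\,ds = t^{-1/p} J_0(\eta t^{-1/p}),
\]
where $J_0(\eta)=\int e^{i(|s|^p-s\eta)}\,ds$ is a bounded Fresnel-type integral, tractable by stationary phase / van der Corput away from the degenerate critical point of $|s|^p$ at $s=0$ (for $p\notin 2\N$), handled by splitting into near- and away-from-origin pieces. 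Fourier-expanding each of the three copies of $f$ in $h$ via $f(x)=(2\pi)^{-d}\int \hat f(\xi)e^{ix\xi}d\xi$ turns $H(t)$ into a trilinear form in $\hat f$ with symbol $\prod_i J(t,(\xi_2+2\xi_3)_i)$, and the required coordinatewise $t^{-d/p}$ decay is extracted via the 1D bound on $J$ applied per coordinate — this is the time-frequency / multilinear-singular-integral step referred to in the abstract. Inserting the pointwise bound into the $t$-integral,
\[
|\NN_\la(f)-\MM^\eps_\la(f)| \lesssim C_{p,d}\,\la^{-d+p}\,N^d\int_{1/(\eps\la^p)}^{\infty} t^{-d/p}\,dt \lesssim C_{p,d}\,\eps^{d/p-1}\,N^d,
\]
valid once $d>p$; the $\la$-powers cancel exactly, giving $\gamma_p=1/p$. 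The complementary small-$|t|$ range contributes negligibly via the vanishing of $(1-\psi(\eps\la^p t))$ to order two at $t=0$ combined with the trivial bound $|H(t)|\leq \|h\|_1\lesssim N^{2d}$, provided $d$ is sufficiently large in terms of $p$.

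The principal technical obstacle is the pointwise decay of $H(t)$ in the absence of smoothness on $h$: a direct coordinatewise van der Corput in the $y$-variable would require $h$ to be of bounded variation in each coordinate, which fails when $f$ is an indicator. Working on the Fourier side through the trilinear structure bypasses this, but one must control the multilinear oscillatory integral carefully to avoid losses, especially near the critical exponents $p=1$ and $p=2$ where the 1D stationary-phase analysis degenerates (a cusp in the phase $|s|^p$ as $p\to 1^+$; the breakdown of sharp $\eta$-decay in $J_0$ as $p\to 2$). This is the source of the blow-up of $C_{p,d}$ claimed in the proposition, while $\gamma_p=1/p\to 0$ as $p\to\infty$ simply reflects the flattening of the $l^p$-ball toward a cube.
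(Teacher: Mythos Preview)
Your approach differs fundamentally from the paper's, and it contains a genuine gap at the central step. The paper does \emph{not} attempt a pointwise-in-$t$ bound on the trilinear oscillatory integral $H(t)$; instead it first decouples the problem from $f$ entirely via three applications of Cauchy--Schwarz (Lemma~\ref{lem2.1}), obtaining $|\NN_\la(f)-\MM^\eps_\la(f)|\lesssim N^d\|\sigma-\omega^\eps\|_{U^3}$, and then estimates this Gowers $U^3$ norm by a one-dimensional oscillatory integral computation (Lemmas~\ref{lem2.2} and~\ref{lem4.2}), yielding $\gamma_p=1/(8r_p)$ with $r_p=\max(p+1,2p-1)$. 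The $U^3$ step is not a technicality: three-term progression counts are not controlled by ordinary Fourier analysis, and your Fourier-expansion argument amounts to treating the symbol $(\xi_2,\xi_3)\mapsto\prod_i J(t,(\xi_2+2\xi_3)_i)$ as a bilinear multiplier and extracting $t^{-d/p}$ from the sup-norm of $J$. But a bounded symbol does \emph{not} produce a bounded bilinear operator $L^2\times L^2\to L^1$ (this is exactly the obstruction that forces Coifman--Meyer type hypotheses or time-frequency machinery), so ``the required coordinatewise $t^{-d/p}$ decay is extracted'' is an unjustified leap. Worse, for $1<p<2$ stationary phase gives $|J_0(\eta)|\sim |\eta|^{(2-p)/(2(p-1))}\to\infty$, so the symbol is not even bounded in the first place. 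Your handling of the small-$|t|$ region is also insufficient: combining $|1-\psi(\eps\la^p t)|\lesssim(\eps\la^p t)^2$ with the trivial $|H(t)|\lesssim N^{2d}$ on $|t|\lesssim(\eps\la^p)^{-1}$ produces a contribution of order $\la^{-d}\eps^{-1}N^{2d}$, which is not $O(N^d)$ uniformly for $\la\ll N$.

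Finally, you have misplaced the r\^ole of the multilinear singular integral theory alluded to in the abstract. In the paper that machinery (the result of Muscalu--Tao--Thiele) is used for the \emph{other} error term, namely Proposition~\ref{tf-prop}, where one sums $|\mathcal E_{\la_j}(f)|^2$ over a lacunary sequence of scales and must recognize the resulting kernel as Calder\'on--Zygmund. It is not used for Proposition~\ref{prop2.2} at all.
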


The proof of Proposition \ref{prop2.2} is based on two facts that may
be of independent interest. The
first is an inequality showing that the so-called $U^3$-uniformity
norm of Gowers \cite{Gow} controls expressions like $\NN_\la(f)$. Let
us recall the definition of the $U^3$ norm for a compactly supported bounded measurable function $g$:
\eq\label{2.10}
\|g\|_{U^3(\R^d)}^8=\int_{(x,y)\in\R^d \times \mathbb R^{3d}} \Bigl(\prod_{\nu \in \{0,1\}^3} \bar{g}^\nu (x+\nu_1y_1+\nu_2y_2+\nu_3y_3)\Bigr)\,dxdy,
\ee
where $\nu_1,\nu_2,\nu_3$ can take the values $0$ or $1$,
$\bar{g}^\nu=\bar{g}$ if $\nu_1+\nu_2+\nu_3$ is odd and
$\bar{g}^\nu=g$ otherwise.
%Recall that $\chi_+$ denotes the indicator function of the positive quadrant $\R_+^d$.
\vskip0.2in

\begin{lemma}\label{lem2.1} Let $f:[0,N]^d\to [-1,1]$ and let $\ 0<\eps<1$. Then for $0<\la\ll N$ one has
\eq\label{2.11}
|\NN_\la(f)-\MM^\eps_\la(f)|\,\ls\,N^d\,\|\si-\om^\eps\|_{U^3}.
\ee
\end{lemma}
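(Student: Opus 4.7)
Write the difference as
\[
\NN_\la(f) - \MM^\eps_\la(f) \;=\; T \;:=\; \iint \bigl(\si_\la - \om^\eps_\la\bigr)(y)\, f(x)\,f(x+y)\,f(x+2y)\,dx\,dy,
\]
interpreting the distributional density of $\si_\la$ via (2.2). The scaling identities (2.2) and (2.7) give $(\si_\la - \om^\eps_\la)(y) = \la^{-d}(\si - \om^\eps)(y/\la)$; setting $K := \si - \om^\eps$ (compactly supported in a ball $B(0,C)$, by (2.7) for $\om^\eps$ and the explicit surface density of $\si$) and substituting $y \mapsto \la y$ reduces the claim to the scale-one inequality
\[
|T| \;\ls\; N^d\,\|K\|_{U^3}.
\]

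The plan is a three-fold Cauchy--Schwarz argument in the spirit of Gowers's ``generalized von Neumann'' inequality. Three successive C--S steps, each applied in a carefully chosen variable, introduce a new difference $h_i \in \R^d$ and double the number of shifted copies of $K$ present in the integrand, so that after three rounds the $K$-piece is precisely the integrand
\[
\prod_{\nu \in \{0,1\}^3} K^{(\nu)}\bigl(y + \nu_1 h_1 + \nu_2 h_2 + \nu_3 h_3\bigr)
\]
of $\|K\|_{U^3}^{8}$, with $K^{(\nu)} \in \{K, \bar K\}$ chosen according to the parity of $|\nu|$. In Round~1 one applies C--S in $x$ together with the substitution $y' = y + h_1$, producing the pair $K(y)\bar K(y+h_1)$ together with a bounded (by $1$) product of four shifted $f$-values. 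Round~2 introduces $h_2$ via a similar C--S and change-of-variable step, yielding the $U^2$-pattern $K(y)\bar K(y+h_1)\bar K(y+h_2)K(y+h_1+h_2)$ multiplied by a bounded eightfold product of $f$-shifts. Round~3 introduces $h_3$ and completes the $U^3$-integrand. The $f$-factors remain pointwise bounded by $1$ throughout; only the initial step uses the $L^2$-estimate $\|f\|_2^2 \le N^d$, contributing the sole volume factor. Taking $8$th roots of $|T|^{8} \ls N^{8d}\,\|K\|_{U^3}^{8}$ delivers the claimed inequality.

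\textbf{Main obstacle.} The principal technical difficulty is arranging the three Cauchy--Schwarz applications so that the $K$-shifts accumulate into the full $U^3$-integrand rather than collapsing into a weaker $\|K\|_2$ or $\|K\|_{U^2}$ bound; this requires choosing the C--S variables so that the shifts of $K$ are preserved at each step rather than paired via a naive $|K|^2$ estimate. The compact support of $K$ is essential: it ensures that each difference-variable integration $\int\cdots dh_i$ is over a bounded region, contributing only $O(1)$, so no spurious factors of $N$ enter. A secondary point is to make sense of $\|\si - \om^\eps\|_{U^3}$ given that $\si$ is a surface measure; this is handled by approximating $\si$ by the smooth kernels $\om^{\eps'}$ as $\eps' \to 0$ via (2.2), verifying that the $U^3$ norm passes to the limit, and justifying the three C--S manipulations at the level of Schwartz functions.
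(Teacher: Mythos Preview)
Your approach is correct and essentially the same as the paper's. The paper isolates the three-fold Cauchy--Schwarz argument as a separate Lemma~4.1, stated for a general bounded kernel $g$ supported at scale $\la$ (obtaining $|T|\ls N^d\la^{d/2}\|g\|_{U^3}$), and then applies it with $g=\om^\eta_\la-\om^\eps_\la$ together with the $U^3$ scaling identity $\|\om^\eta_\la-\om^\eps_\la\|_{U^3}=\la^{-d/2}\|\om^\eta-\om^\eps\|_{U^3}$ and a passage to the limit $\eta\to 0$. You instead rescale first so that $K=\si-\om^\eps$ lives at scale $1$ and then run the identical three Cauchy--Schwarz steps; the two routes are equivalent.

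Two minor inaccuracies in your write-up are worth flagging. First, the phrase ``contributing the sole volume factor'' is misleading: each of the three Cauchy--Schwarz applications (and the final $x$-integration) introduces a separate factor of $N^d$, for a total of $N^{8d}$ before taking eighth roots; only the $h_i$-integrations are $O(1)$ by the compact support of $K$. Second, after Round~2 the integrand carries a \emph{four}fold (not eightfold) product of $f$-shifts---two of the original four are absorbed as the Cauchy--Schwarz weight. Neither point affects the correctness of the final bound $|T|^8\ls N^{8d}\|K\|_{U^3}^8$.
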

While it is not apriori clear how to define the $U^3$-norm of the measure
$\si$ defined in \eqref{2.2}, we note that $\om^\eps\to\si$ weakly as
$\eps\to 0$. To prove \eqref{2.11}, we  first establish that
$\{\om^\eta\}$ is a Cauchy sequence
with respect to the $U^3$-norm and then define
$\|\si - \omega^{\epsilon}\|_{U^3}:=\lim_{\eta\to 0} \|
\omega^{\eta} - \om^\eps \|_{U^3}$. \\

%Let $\chi_+$ denote the indicator function of the positive quadrant
%$\R_+^d$.
We also show that
\begin{lemma}\label{lem2.2} Let $0<\eps<1,\ p>1,\,p\neq 2$. If $d>8r_p$ with $r_p=\max (p+1,2p-1)$, then one has
\eq\label{2.12}
\|\si-\om^\eps\|_{U^3} \leq C_{p,d}\eps^{\frac{d}{8r_p}-1}.
\ee
where the constant $C_{p,d}$ has the same behaviour as described in
Proposition \ref{prop2.2}.
\end{lemma}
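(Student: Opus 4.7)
Since $\si$ is a distribution, the quantity $\|\si-\om^\eps\|_{U^3}$ must be defined via the Cauchy-sequence convention noted after Lemma~\ref{lem2.1}. Accordingly the plan is to establish the uniform bound $\|\om^\eta-\om^\eps\|_{U^3}\le C_{p,d}\,\eps^{d/(8r_p)-1}$ for all $0<\eta<\eps$; this simultaneously verifies that $\{\om^\eta\}$ is Cauchy in $U^3$ and, upon passing to the limit $\eta\to 0^+$, delivers the stated estimate for $\si-\om^\eps$.

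The core computation substitutes the oscillatory representation
\[(\om^\eta-\om^\eps)(z)=\int_\R e^{it(|z|^p-1)}\,m(t)\,dt,\qquad m(t):=\psi(\eta t)-\psi(\eps t),\]
into each of the eight factors defining $\|\om^\eta-\om^\eps\|_{U^3}^8$ in \eqref{2.10}. The function $m$ is $O(1)$ uniformly in $\eta$ and is essentially supported in $|t|\gtrsim \eps^{-1}$. After interchanging orders of integration one obtains, for each vertex $\nu\in\{0,1\}^3$, a separate parameter $t_\nu$ and a cumulative spatial phase $\sum_\nu t_\nu\,|x+\nu_1 y_1+\nu_2 y_2+\nu_3 y_3|^p$. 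The decisive feature is that $|z|^p=\sum_{i=1}^d|z_i|^p$ decouples across coordinates, so the $(x,y_1,y_2,y_3)$-integration factorizes as $\Phi_1(\bt)^d$, where $\Phi_1$ is the analogous one-dimensional oscillatory integral and $\bt=(t_\nu)_\nu\in\R^8$.

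The technical heart is then a uniform estimate $|\Phi_1(\bt)|\le C_p(1+\|\bt\|_\infty)^{-1/r_p}$, obtained by van der Corput-type bounds on the one-dimensional phase $\sum_\nu t_\nu |s_\nu|^p$. The exponent $r_p=\max(p+1,2p-1)$ reflects two distinct sources of degeneracy: for $p>2$, the $p$-th derivative of $|s|^p$ controls the decay at critical configurations where some $s_\nu$ vanishes, producing a $1/(2p-1)$ gain after accounting for the eight parameters; for $1<p<2$, the cusp-type non-smoothness of $|s|^p$ at the origin dictates the weaker $1/(p+1)$ gain. Once this is in place, the outer $\bt$-integration contributes at most $\prod_\nu |m(t_\nu)|\lesssim \eps^{-8}$ on the effective support $|t_\nu|\gtrsim \eps^{-1}$, while $|\Phi_1(\bt)|^d\lesssim \|\bt\|_\infty^{-d/r_p}\lesssim \eps^{d/r_p}$ on that support. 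Combining yields $\|\om^\eta-\om^\eps\|_{U^3}^8\le C_{p,d}\,\eps^{-8+d/r_p}$, and an eighth-root gives Lemma~\ref{lem2.2}, simultaneously exposing the $p$-sensitivity of $C_{p,d}$ (blow-up near $p=1,2$ from the degeneracies above) and the fact that $\gamma_p=1/(8r_p)\to 0$ as $p\to\infty$.

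The main obstacle is the bound on $\Phi_1$. Although it is only a one-dimensional spatial integral, the parameter vector $\bt$ ranges over $\R^8$ with a rich stratification of degenerate loci---vanishing partial sums of the $t_\nu$, and coincidences among the arguments $x+\nu\cdot y$---so extracting a scalar decay rate in $\|\bt\|_\infty$ requires a careful dyadic decomposition in $\bt$ and separate van der Corput arguments in the regimes where different orders of derivative dominate. It is precisely this case analysis that pins down the sharp exponent $r_p$.
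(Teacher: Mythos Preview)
Your approach differs from the paper's in a way that makes the problem substantially harder, and as written it has real gaps.

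The paper's key move is to apply Minkowski's inequality for the $U^3$-norm \emph{before} expanding the oscillatory representation. After inserting a smooth cutoff $\Phi_+$ (legitimate since $\om^\eta-\om^\eps$ is compactly supported) and restricting by symmetry to the positive orthant, one has
\[
\|\chi_+(\om^\eta-\om^\eps)\|_{U^3}\le\int_\R |\psi(\eta t)-\psi(\eps t)|\,\bigl\|\Phi_+(\cdot)\,e^{it|\cdot|^p}\bigr\|_{U^3}\,dt,
\]
which leaves a \emph{single} real parameter $t$. The inner $U^3$-norm factors over coordinates and becomes the one-dimensional integral $I(t)$ of Lemma~\ref{lem4.2}, whose phase is $t\bigl(y^p+(y+k+l)^p-(y+k)^p-(y+l)^p\bigr)$: the alternating $U^3$ signs sit on the spatial side, a Taylor expansion bounds $|\psi_{k,l}'(y)|$ from below, and one integration by parts gives $|I(t)|\lesssim|t|^{-1/r_p}$. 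The outer integral is then just $\int|m(t)|\,|t|^{-d/(8r_p)}dt\lesssim\eps^{d/(8r_p)-1}$, uniformly in $\eta$.

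By expanding all eight factors first, you trade this for an $8$-parameter integral, and three concrete problems appear. First, without a spatial cutoff your $\Phi_1(\bt)$ is an integral of a pure phase over all of $\R^4$ and does not converge; Fubini is not justified since $\prod_\nu|m(t_\nu)|$ is independent of $(x,y)$. Second, the bound $|\Phi_1(\bt)|\lesssim(1+\|\bt\|_\infty)^{-1/r_p}$ is the heart of the matter and you do not prove it; with independent $t_\nu$ the $U^3$ sign cancellation is decoupled from the oscillation parameters, and the ``rich stratification'' you allude to genuinely requires a multi-parameter analysis far beyond the clean single-parameter Lemma~\ref{lem4.2}. Third, your final accounting is wrong: the effective support of $m$ is $\eps^{-1}\lesssim|t|\lesssim\eta^{-1}$, so the $\bt$-support has volume $\sim\eta^{-8}$, not $\eps^{-8}$, and your product bound is not uniform in $\eta$. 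A correct version needs a dyadic decomposition in $\|\bt\|_\infty$, summing $\sum_{R\gtrsim\eps^{-1}}R^{8-d/r_p}$ (convergent precisely when $d>8r_p$); that does give $\eps^{d/r_p-8}$, but it is not what you wrote. The Minkowski-first route sidesteps all three issues at once.
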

Let us note in passing that \eqref{2.11} and \eqref{2.12} yield
\eqref{2.10} with $\gamma= 1/(8r_p)$. The proof of Lemma \ref{lem2.2} uses in an essential way that on $\R^d$ the norm is
defined by the expression $|y|^p=\sum_i |y_i|^p$ for some $p>1,\ p\neq
2$. In particular, a pivotal role is played by the fact that, for $p \ne 1,2$ and $x, x+y, x+2y
\in \mathbb R_{+}^d$, \[ |x|^p + |x + 2y|^p - 2|x + y|^p =
\sum_{i=1}^{d} \bigl[x_i^p + (x_i+2y_i)^p - 2(x_i +
y_i)^p \bigr] \]  does not vanish identically.
%The proof of \eqref{2.12} is in fact identical on each region obtained by letting the reflections across the coordinate hyperplanes act on $\R_+^d$, and in turn \eqref{2.11} follows by normalization and an application of the triangle inequality for the $U^3$-norm .  \\
It is worth remarking that this part of the argument fails both for
the standard Euclidean norm and the $l^1$-norm. Indeed, estimate
\eqref{2.12} does not hold for either the $l^2$ or the $l^1$-metric.
\vskip0.2in

\subsubsection{Multilinear Calder\'on-Zygmund singular integral
  operators} The final ingredient in the proof of Theorem \ref{thm2.2}
is an estimate given in \cite{MTT} for certain multilinear operators similar
to the bilinear Hilbert transform. In order to describe the form in
which we need this estimate,
let us fix $0 < \epsilon \ll 1$ and define the constant $c_1(\epsilon)$ as follows,
%To see how it comes into the picture, assume that Theorem \ref{thm2.2} does not hold for a lacunary sequence $\la_1<\la_2<\ldots <\la_J\ll N$, that is $\NN_{\la_j}(f)=0$ for $f=\1_A$ for all $1\leq j\leq J$. For $0<\eps\ll 1$ define the quantity $c_1(\eps)$ so that
\eq\label{2.12.5}
c_1(\eps)\int_{\R^d}\om (y)\,dy = \int_{\R^d}\om^\eps (y)\,dy.
\ee
We see below in Lemma \ref{lemma-c1e} that $c_1(\eps)\approx 1$,
i.e. is bounded by two positive constants depending only on the
dimension $d$.
%Then for a sufficiently small $\eps=\eps(\de)$ we have by Propositions \ref{prop2.1} and \ref{prop2.2}
%\eq\label{2.13}
%|\MM^\eps_{\la_j}(f)-c_1(\eps) \MM_{\la_j}(f)|\,\gs\,c(\de)\,N^d,
%\ee
%for all $1\leq j\leq J$ as long as $\eps=\eps(\de)$ is chosen
%sufficiently small.
Write $k^\eps(y):=\om^\eps_{\la}(y)-c_1(\eps)\,\om_{\la_j}(y)$,
and
\eq\label{2.14}
\mathcal{E}_{\la}(f):=\MM^\eps_{\la}(f)-c_1(\eps)\MM_{\la}(f)\,=\,\int_{\R^d}\int_{\R^d} f(x)f(x+y)f(x+2y)\,k^\eps(y)\,dy\,dx,
\ee
so that by \eqref{2.12.5} one has the cancellation property
\eq\label{2.14.5}
\int_{\R^d} k^\eps (y)\,dy =\int_{\R^d} (\om^\eps_{\la}(y)-c_1(\eps)\,\om_{\la}(y))\,dy = \int_{\R^d} (\om^\eps(y)-c_1(\eps)\,\om (y))\,dy\,=0.
\ee
The key estimate concerning the operator $\mathcal E_{\lambda}$ is the
following:
\begin{prop}\label{tf-prop}
Suppose that $\{\lambda_j: 1 \leq j \leq J \}$ is a lacunary sequence (finite or
infinite) with $\lambda_{j+1} \geq 2\lambda_j$ for all $j$. Then for
any $f:[0,N]^d \rightarrow [-1,1]$,
\[\sum_{j=1}^{J} |\mathcal E_{\lambda_j}(f)|^2 \leq C_{\epsilon}
N^{d}||f||_{4}^4 \leq C_{\epsilon} N^{2d},\]
where the constant $C_{\epsilon}$ depends only on the quantity $\epsilon$
used to define $\mathcal E_{\lambda}$ and, in particular, is
independent of $f$ and the
number $J$ of elements in the lacunary sequence.
\end{prop}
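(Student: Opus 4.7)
The plan is to rewrite $\mathcal E_\la(f)$ as a pairing $\langle f,\,T_\la(f,f)\rangle$ with a bilinear operator and then extract a bilinear square function estimate from the multilinear time-frequency theory of \cite{MTT}. Concretely, set
\[T_\la(g_1,g_2)(x) := \int_{\R^d} g_1(x+y)\,g_2(x+2y)\,k^\eps_\la(y)\,dy,\]
so that $\mathcal E_\la(f)=\int f(x)\,T_\la(f,f)(x)\,dx$. A Fourier-side computation shows that $T_\la$ is a bilinear Fourier multiplier with symbol $m_\la(\xi_1,\xi_2)=\widehat{k^\eps}(\la(\xi_1+2\xi_2))$ (up to signs), which depends on $(\xi_1,\xi_2)$ only through the single $\R^d$-valued combination $\eta:=\xi_1+2\xi_2$, and is therefore constant along the $d$-dimensional subspace $\{\xi_1+2\xi_2=0\}\subs\R^{2d}$. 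Since $\int k^\eps=0$ by \eqref{2.14.5}, the Schwartz function $\widehat{k^\eps}$ vanishes at the origin; hence for a lacunary sequence $\la_j$ the family $\{m_{\la_j}\}$ behaves as a Littlewood--Paley-type decomposition in the $\eta$-direction, and in particular satisfies $\sum_j |m_{\la_j}(\xi_1,\xi_2)|^2\le C_\eps$ pointwise.

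Cauchy--Schwarz in $x$ then gives
\[\sum_{j=1}^J |\mathcal E_{\la_j}(f)|^2 \,\le\, \|f\|_2^2\,\sum_{j=1}^J \|T_{\la_j}(f,f)\|_{L^2}^2 \,\le\, N^d\,\|S(f,f)\|_{L^2}^2,\]
where $S(g_1,g_2)(x):=\bigl(\sum_j |T_{\la_j}(g_1,g_2)(x)|^2\bigr)^{1/2}$ is the lacunary bilinear square function associated with the family. It therefore suffices to establish the bilinear square function estimate
\[\|S(f,f)\|_{L^2(\R^d)} \,\le\, C_\eps\,\|f\|_{L^4(\R^d)}^2,\]
after which the trivial bound $\|f\|_4^4\le N^d$ recovers both inequalities in the proposition. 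By Khintchine's inequality, this in turn reduces to a uniform-in-signs $L^4\times L^4\to L^2$ bound for the bilinear multiplier with randomized symbol $\sum_j r_j\,m_{\la_j}$, $r_j\in\{\pm 1\}$. These randomized symbols still depend on $(\xi_1,\xi_2)$ only through $\eta$, so the degenerate direction remains the $d$-dimensional subspace $\{\xi_1+2\xi_2=0\}$, which is precisely the geometric configuration underlying the bilinear Hilbert transform and its higher-dimensional analogues treated in \cite{MTT}.

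The desired $L^4\times L^4\to L^2$ bound is then of the type established in \cite{MTT} for bilinear Fourier multipliers with symbols concentrated near such subspaces. I expect the main obstacle to lie exactly at this step: verifying that the randomized symbols $\sum_j r_j\,\widehat{k^\eps}(\la_j\,\eta)$ satisfy the Coifman--Meyer-type estimates required to invoke the multilinear machinery, with constants independent of the sign choices (which should follow from the lacunarity of $\{\la_j\}$ together with the Schwartz decay of $\widehat{k^\eps}$), and then tracking how the resulting constant $C_\eps$ depends on the size of $\widehat{k^\eps}$ and its derivatives. As $\eps\to 0$ the kernel $k^\eps$ becomes more oscillatory and these constants blow up, consistent with the $\eps$-dependence stated in the proposition.
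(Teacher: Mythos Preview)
Your approach is correct and reaches the same destination as the paper, but by a somewhat different route. After the Cauchy--Schwarz step (which you and the paper perform identically), the paper does \emph{not} randomize. Instead it expands $|\mathcal E_{\la_j}(f)|^2$ as a double integral in $(y,z)$, interchanges the $j$-sum with the integrals, and recognizes the result as $N^d\int f(x)\,\mathcal K_J^\eps(f,f,f)(x)\,dx$, where $\mathcal K_J^\eps$ is the \emph{trilinear} operator with kernel $K_J^\eps(y,z)=\sum_j k_j^\eps(y)k_j^\eps(z)$. One then applies H\"older to pair $f$ (in $L^4$) against $\mathcal K_J^\eps(f,f,f)$ (in $L^{4/3}$) and invokes \cite{MTT} for the $4$-linear form; the multiplier verification (their Lemma~\ref{lem5.1}) shows that the symbol $\widehat{K_J^\eps}$ satisfies the required Mikhlin-type bounds away from a $d$-dimensional subspace $\Gamma'\subset\Gamma\simeq\R^{3d}$, uniformly in $J$ and in the lacunary sequence.

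Your route keeps the problem bilinear at the cost of a Khintchine step: you must verify that each randomized symbol $\eta\mapsto\sum_j r_j\,\widehat{k^\eps}(\la_j\eta)$ is a Mikhlin multiplier in $\R^d$ with constants independent of the signs $r_j$, and then feed this into the bilinear (i.e.\ $3$-linear form) case of \cite{MTT} with singular subspace $\{\xi_1+2\xi_2=0\}$. That verification is essentially the one-variable version of the paper's Lemma~\ref{lem5.1} and goes through for the same reasons (cancellation $\widehat{k^\eps}(0)=0$, Schwartz decay, lacunarity). What each approach buys: the paper's trilinear reduction yields a single fixed operator with no sign choices, making the multiplier check cleaner; your square-function/Khintchine reduction stays in the bilinear world and is perhaps more modular, but introduces an additional uniformity requirement over $\{r_j\}$ that must be tracked. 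Either way the non-degeneracy needed for \cite{MTT} is satisfied, and the $\eps$-dependence of the constant enters in the same way.
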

We provide details of this result in Section \ref{time-frequency section}.
\vskip0.2in
\begin{proof}[Proof of Theorem \ref{thm2.2}]
Assuming Propositions \ref{prop2.1}, \ref{prop2.2} and \ref{tf-prop}
for now,
the proof proceeds by contradiction. Assume that there exist
arbitrarily large $N$, a measurable set $A \subseteq [0,N]^d$
with $|A| \geq \delta N^d$, and a sequence of
non-admissible progression gaps $\lambda_1 <
\lambda_2 < \cdots < \lambda_J \ll N$ for some $J \geq J(\delta)$,
such that $\mathcal N_{\lambda_j}(f) = 0$ for $f= 1_A$. The sequence
may be chosen to be lacunary, and $J$ may be assumed to be arbitrarily
large as well, by choosing $N$ large enough. Thus, for $1 \leq j \leq J$,
\[ 0 = \mathcal N_{\lambda_j}(f) = c_1(\epsilon) \mathcal
M_{\lambda_j}(f) + \bigl[\mathcal N_{\lambda_j}(f) - \mathcal
M_{\lambda_j}^{\epsilon} (f)\bigr] + \mathcal E_{\lambda_j}(f).\]
In view of this, Propositions \ref{prop2.1}, \ref{prop2.2}, and recalling
that $c_1(\epsilon) \approx 1$, we find that for some sufficiently
small $\epsilon$ depending on $p, d$ and $\delta$, the inequality
\[ \frac{c(\delta)}{2} N^d \leq \bigl(c(\delta) - C_{p,d}
\epsilon^{\gamma p-1} \bigr)N^d \leq \bigl|c_1(\epsilon) \mathcal
M_{\lambda_j}(f) + \bigl[\mathcal N_{\lambda_j}(f) - \mathcal
M_{\lambda_j}^{\epsilon}(f) \bigr]\bigr|  = \bigl| \mathcal E_{\lambda_j}(f)\bigr|\]
holds for every $1 \leq j \leq J$.  Squaring both sides and summing
over all $j \leq J$ yields, after an application of Proposition
\ref{tf-prop} with $f = 1_{A}$,
\[ c(\delta) J N^{2d} \leq \sum_{j=1}^{J}|\mathcal
E_{\lambda_j}(f)|^2 \leq C_{\epsilon}N^{2d}. \]
This implies that $J \leq C_{p,d,\delta}$, contradicting the hypothesis that $J$
can be chosen arbitrarily large.
\end{proof}
%Thus by the Cauchy-Schwarz inequality
%\begin{align}\label{2.15}
%|\mathcal{E}_{\la_j}(f)|^2\,&\ls\,N^d\,\int\int\int f(x+y)f(x+z)f(x+2y)f(x+2z)\,k_j^\eps(y)k_j^\eps(z)\,dydzdx\nonumber\\
%&=\,N^d\,\int\int\int f(x)f(x+z-y)f(x+y)f(x+2z-y)\,k_j^\eps(y)k_j^\eps(z)\,dydzdx.
%\end{align}
%Summing \eqref{2.15} for $1\leq j\leq J$ one has
%\eq\label{2.16}
%\sum_{j=1}^J |\MM^\eps_{\la_j}(f)-\MM_{\la_j}(f)|^2\,\ls\,\,N^d\,\int f(x)\, \mathcal{K}^\eps_J(f,f,f)(x)\,dx,
%\ee
%where the trilinear operator on the right side is given by
%\eq\label{2.17}
%\mathcal{K}_J^\eps (f_1,f_2,f_3)(x):=\int\int f_1(x+z-y)f_2(x+y)f_3(x+2z-y)K_J^\eps(y,z)\,dy\,dz,
%\ee
%with kernel
%\eq\label{2.18}
%K_J^\eps(y,z)=\sum_{j=1}^J k_j^\eps (y)k_j^\eps (z).
%\ee
%This is a Calderon-Zygmund kernel, $L^p$ mapping properties of associated operators of the type given in \eqref{2.17} are studied by Muscalu, Tao and Thiele \cite{MTT}. Indeed, by Theorem 1.1 in \cite{MTT} one has the estimate
%\eq\label{2.19}
%\|\mathcal{K}_J^\eps (f,f,f)\|_{L^{4/3}(\R^d)}\,\leq\,C_\eps\,\|f\|_{L^4(\R^d)},
%\ee
%\\
%with a constant $C_\eps >0$ independent of $J$ and the sequence $\la_1<\ldots <\la_J$.

%Then
%\begin{align}\label{2.20}
%\sum_{j=1}^J |\MM^\eps_{\la_j}(f)-\MM_{\la_j}(f)|^2\,&\ls\,\,N^d\,\int f(x) \mathcal{K}^\eps_J(f,f,f)(x)\,dx\nonumber\\
%&\leq\,C_\eps\,N^d\, \|f\|_{L^{4/3}(\R^d)}\,\|f\|_{L^4(\R^d)}\leq\,C_\eps\,N^{2d}.
%\end{align}
%\\
%However by \eqref{2.13} we have that the left side of \eqref{2.20} is at least $c(\de)^2 J\,N^{2d}$, thus choosing $J=J(\eps,\de)$ sufficiently large we get a contradiction, and Theorem \ref{2.2} follows.

\section{The main term} \label{main term section}

We now set about proving the main propositions leading up to the
theorem. In this section we prove Proposition \ref{prop2.1} via an application of Roth's theorem on compact abelian groups (see \cite{Bo}, Appendix A, Theorem 3). The compact group of interest is of course the $d$-dimensional torus $\Pi^d$.

\vskip0.2in
\begin{proof}[Proof of Proposition \ref{prop2.1}]
%If we define $A'':=A'-\alpha$ with $A'$ and $\alpha$ constructed in
%the above lemma, then $A''\subs [-N,N]^d$ will be symmetric with
%respect to all the coordinate hyperplanes and will have density
%$\de''\geq 2^{-3d+1}\de^{2^d}>0$. Since $\mathcal M_{\lambda}(1_A)
%\geq \mathcal M_{\lambda}(1_{A'}) = \mathcal M_{\lambda}(1_{A''})$, we
%will assume without loss of generality for the remainder of this
%section that $A\subs [-N,N]^d$ is invariant under all reflections
%$x_i\to -x_i$. Then it is easy to see that for such $f=\1_A$,
%\[2^d\MM_\la(f) = \mathcal M_{\lambda}'(f) :=\int_{\R^d}\int_{\R^d}
%f(x)f(x+y)f(x+2y) \widetilde{\om}_\la(y)\,dx\,dy,\]
%where $\widetilde{\omega}_{\lambda}$ is defined the same way as
%$\omega_{\lambda}$ in \eqref{2.3},  except without the factor
%$\chi_{+}(y)$. Thus in proving Proposition \ref{prop2.1} we may omit the restriction
%$y\in\R_+^d$.
\vskip0.2in

Fix $\de\in(0,1]$ and $\la\ll N$ for a sufficiently large
$N$, and also let $A$ be given with the stated properties. Consider a real-valued function $f:[-N,N]^d\to [0,1]$ with $\int
f\geq \de N^d$.  The proposition, as phrased, is proven from what follows with the choice $f=\1_A$. \\

Equipartition the cube $[-N,N]^d$ in the natural way
into disjoint boxes with sides parallel to the coordinate axes and
length $\ell=c\la$ by choosing a sufficiently small number $c>0$ so
that $N/\ell$ is a positive integer. Enumerate the boxes by $\{B_i : 1
\leq i \leq L\}$. Each box $B_i$ may then be  identified with its
leftmost endpoint  $x_i$, so that $B_i=[0,\ell]^d +x_i$.
\vskip0.2in

Split $f$ into pieces restricted to each box. More precisely, define $g_i(x):[0,\ell]^d\to[0,1]$ by
$g_i(x)=\1_{[0,l]^n}(x+x_i)f(x+x_i)$, so that $f = \sum_{i=1}^{L}g_i$
on $[-N,N]^d$.  The non-negativity of $f$
implies the bound
\eq\label{3.1}
\MM_\la(f)\geq \sum_{i=1}^L\iint_{x,y\in\R^d}g_i(x)g_{i}(x+y)g_i(x+2y)\,
\om_{\lambda}(y)\,dy\,dx.\ee
Recall from \eqref{omega-scaling} that $\om_{\lambda}(y)=\la^{-d}\,\widehat{\psi}\,(\,|y/\la|^p-1\,)$;
hence we may choose $\psi$ such that $\widehat{\psi}\,(\,|y/\la|^p-1\,)\geq1/10$ for $y\in{[-\ell,\ell]^d}$. Then \eqref{3.1} yields
\begin{equation} \label{3.2}
\MM_\la(f)\geq \frac{\la^{-d}}{10}\sum_{i=1}^L\iint_{x,y\in\R^d} g_i(x)g_{i}(x+y)
g_i(x+2y)\,dy\,dx.\end{equation}
\vskip0.2in

Now identify $\Pi^d$ with the cube $[-\frac{1}{2},\frac{1}{2}]^d$. After a change of
variable $(x,y) \mapsto 10\ell (x,y)$, each summand on the right hand side of (\ref{3.2}) may be written as
\begin{align*}
(10\ell)^{2d}\iint_{x,y\in\R^d}
g_i(10\ell x)&g_{i}(10\ell(x+y))g_i(10\ell(x+2y))\,dy\,dx \\
=& (10\ell)^{2d}\iint_{x,y\in\Pi^d} g_i(10\ell x)g_{i}(10\ell(x+y))g_i(10\ell(x+2y))\,dy\,dx.
\end{align*}
Note that the support assumptions on the $g_i$ dictate that the integrand
is supported on $[-\frac{1}{10}, \frac{1}{10}]^{2d} \subset \Pi^{2d}$, as indicated in
the last step.
%Here we have chosen the constant $c$ so that the integrand is
%supported on $\Pi^d$, which can clearly  simultaneously be done.
If we also have that
\begin{equation} \label{Roth-lemma-eq}\int_{\Pi^d} g_{i}(10\ell
  x)\,dx\geq \eta > 0
\end{equation}
for some index $i$ then Roth's theorem on compact abelian groups would
imply that for such an index
\[
\iint_{\Pi^d \times \Pi^d} g_i(10\ell x) g_{i}(10\ell (x+y)) g_i(10\ell (x+2y)) \,dx\,dy\, \geq \,c_0(d,\eta),
\]
where $c_0(d,\eta)>0$ is a constant uniform in $i$ but depending only on $d$ and $\eta$. We prove below in Lemma \ref{Roth-lemma}
that \eqref{Roth-lemma-eq} holds with $\eta = \delta
(10)^{-d}/2$ for at least $\delta L/2$ indices $i$. Summing over all these
indices in \eqref{3.2} then leads to the
bound
\[ \mathcal M_{\lambda}(f) \geq c_0(d, \eta) \lambda^{-d} \frac{\delta}{2}L
(10 \ell)^{2d} = c(d, \delta),\]
as claimed.
\end{proof}
\vskip0.2in
\begin{lemma} \label{Roth-lemma}
The relation \eqref{Roth-lemma-eq} holds with $\eta = \frac{\delta
  (10)^{-d}}{2}$ for at least $\delta L/2$ indices $i$.
%For at least $(\de/4) L$ of the $i=1,...,L$ we have that $\int
%g_i\geq (\de /4) |B_i|$.
\end{lemma}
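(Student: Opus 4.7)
The plan is to first translate the desired inequality \eqref{Roth-lemma-eq} into a plain $L^1$-mass condition on the functions $g_i$, and then establish it by a routine pigeonhole/averaging argument, exploiting the fact that each $g_i$ is bounded by $1$ and supported on a box of volume $\ell^d$.

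First I would perform a change of variables in the torus integral. Since $g_i$ is supported in $[0,\ell]^d$, which under the map $x \mapsto 10\ell x$ pulls back to the set $[0, 1/10]^d \subset [-1/2,1/2]^d = \Pi^d$, the substitution $y = 10\ell x$ gives
\[
\int_{\Pi^d} g_i(10\ell x)\,dx \;=\; (10\ell)^{-d} \int_{[0,\ell]^d} g_i(y)\,dy \;=\; (10\ell)^{-d}\,\|g_i\|_{1}.
\]
Therefore the desired bound $\int_{\Pi^d} g_i(10\ell x)\,dx \geq \delta (10)^{-d}/2$ is equivalent to the mass condition
\[
\|g_i\|_1 \;\geq\; \tfrac{\delta}{2}\,\ell^d.
\]

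Next I would run a pigeonhole argument on the indices. Since the boxes $\{B_i\}_{i=1}^L$ equipartition the ambient cube of volume $L\ell^d$ on which $f$ lives, we have the identity $\sum_{i=1}^L \|g_i\|_1 = \|f\|_1$, and the hypothesis on the density of $A$ (together with $f = \mathbf 1_A$) yields $\|f\|_1 \geq \delta L \ell^d$ after absorbing dimensional constants into the choice of partition. Let $\mathcal G := \{i : \|g_i\|_1 \geq \delta \ell^d/2\}$ be the set of \emph{good} indices and $\mathcal B := \{1,\ldots,L\}\setminus \mathcal G$. Since $0 \leq g_i \leq 1$ on a box of volume $\ell^d$, each summand satisfies $\|g_i\|_1 \leq \ell^d$, whence
\[
\delta L \ell^d \;\leq\; \sum_{i\in\mathcal G}\|g_i\|_1 + \sum_{i\in\mathcal B}\|g_i\|_1 \;\leq\; |\mathcal G|\,\ell^d + |\mathcal B|\,\tfrac{\delta}{2}\ell^d \;\leq\; |\mathcal G|\,\ell^d + \tfrac{\delta}{2}L\,\ell^d.
\]
Dividing through by $\ell^d$ gives $|\mathcal G| \geq \delta L/2$, which is exactly the claim of the lemma.

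There is essentially no hard step here: the argument is a standard first-moment/pigeonhole observation, and the only thing to be careful about is bookkeeping the constants introduced by the change of variables (the factor $(10\ell)^{-d}$ is precisely what produces the $(10)^{-d}$ appearing in the definition of $\eta$). The mild subtlety to address in a careful write-up is ensuring that the ambient cube on which $f$ is defined (whether $[0,N]^d$ or $[-N,N]^d$) is partitioned into exactly $L$ boxes of volume $\ell^d$, so that the equality $\|f\|_1 \geq \delta L \ell^d$ holds with the same $\delta$ as in the hypothesis; this is automatic provided $\ell = c\lambda$ is chosen with $c$ small enough that $N/\ell$ is a positive integer, as is already done in the proof of Proposition \ref{prop2.1}.
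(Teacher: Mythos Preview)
Your proof is correct and follows essentially the same pigeonhole argument as the paper: both translate the torus integral condition into the mass condition $\|g_i\|_1 \geq (\delta/2)\ell^d$ via the change of variables $y = 10\ell x$, then use $\sum_i \|g_i\|_1 = \|f\|_1 \geq \delta N^d = \delta L\ell^d$ together with the trivial bound $\|g_i\|_1 \leq \ell^d$ to conclude that at least $\delta L/2$ indices are good. The only cosmetic difference is that the paper phrases the split as a single inequality $\sum_i \int g_i \leq (\delta/2)N^d + I\ell^d$ rather than explicitly introducing the sets $\mathcal G$ and $\mathcal B$.
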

\begin{proof}
This is a simple pigeonholing argument. Let $I$ denote the number of
indices $i$ for which the integral inequality in \eqref{Roth-lemma-eq}
holds. After a scaling change of variable, this is the same set of
indices $i$ for which $\int_{[0,\ell]^d} g_i(x) \, dx \geq (\de /2) \ell^d$. By our
hypothesis on $f$,
\begin{equation} \label{bound1}
\int_{\R^d} f(x) \, dx = \sum_{i=1}^L \int g_i(x) \, dx \geq \de N^d.
\end{equation}
On the other hand, $0 \leq g_i \leq 1$ is supported on $[0, \ell]^d$,
so $||g_i||_1 \leq \ell^d$ for trivial reasons. This leads to
the estimate
\begin{equation} \label{bound2}
\sum_{i=1}^L\int_{\R^d} g_i(x) dx\leq (\de/2)N^d + I \ell^d.\end{equation}
for each $1\leq i\leq L$. Combining the lower bound in \eqref{bound1}
with the upper bound in \eqref{bound2} and recalling that $L =
N^d/\ell^d$ leads to the claimed statement.
\end{proof}

\section{Error estimates} As indicated in the introduction, our main
objective here is to prove Proposition \ref{prop2.2}, which is a
direct consequence of Lemma \ref{lem2.1} and \ref{lem2.2}. Before
turning our attention to the proof of these lemmas, let us first make the simple but important
observation that $\int \om^\eps (y)\,dy \approx 1$ uniformly for
sufficiently small $\eps>0$. Let $\nu_p:=\bigl|\{y;\ |y|_p\leq
1\}\bigr|$, then by homogeneity $\bigl|\{y;\ |y|_p\leq
\eta\}\bigr|=\nu_p\,\eta^d$. This fact was used in the proof of
Theorem \ref{thm2.2} in order to bound from below the main term
$c_1(\epsilon) \mathcal M_{\lambda_j}(f)$.

\begin{lemma}\label{lemma-c1e} There exist constants $0<c_1<c_2$ depending only on the function $\psi$ and the parameter $d$ and $p$ such that
\eq\label{4.0.1}
c_1\leq \int_{\R^d} \om^\eps (y)\,dy \leq c_2,
\ee
uniformly for $0<\eps<\frac{1}{10d}$.
\end{lemma}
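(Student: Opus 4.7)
The plan is to compute $\int_{\R^d} \om^\eps(y)\,dy$ explicitly by reducing it to a one-dimensional integral via $l^p$-polar coordinates, and then to exploit the compact support of $\widehat\psi$ together with the smallness condition $\eps d \leq 1/10$.

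First, I would invoke the layer-cake identity $\int_{\R^d} F(|y|_p)\,dy = d\nu_p \int_0^\infty F(r)\,r^{d-1}\,dr$, which follows from the scaling relation $|\{y:|y|_p\leq r\}| = \nu_p r^d$ by differentiating in $r$. Applied to $F(r) = \eps^{-1}\widehat\psi((r^p - 1)/\eps)$ coming from \eqref{2.7}, this reduces the $d$-dimensional integral to
\[ \int_{\R^d} \om^\eps(y)\,dy = \frac{d\nu_p}{\eps} \int_0^\infty \widehat\psi\!\Bigl(\tfrac{r^p-1}{\eps}\Bigr) r^{d-1}\,dr.\]

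Next I would substitute $s = (r^p - 1)/\eps$, so that $r = (1 + \eps s)^{1/p}$ and $r^{d-1}\,dr = \tfrac{\eps}{p}(1 + \eps s)^{d/p - 1}\,ds$. This yields
\[ \int_{\R^d} \om^\eps(y)\,dy = \frac{d\nu_p}{p} \int_{-1/\eps}^{\infty} \widehat\psi(s)\,(1 + \eps s)^{d/p - 1}\,ds. \]
Since $\widehat\psi$ is compactly supported on a fixed interval $[-M, M]$ with $M = M(\psi)$, the integration effectively runs over $[-M, M]$.

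The final step uses the hypothesis $\eps < 1/(10d)$ to sandwich the Jacobian factor $(1 + \eps s)^{d/p - 1}$ between two positive constants depending only on $d, p$ and $\psi$. For $|s| \leq M$ we have $|\eps s| \leq M/(10d)$; assuming $d$ is large enough that $M/(10d) \leq 1/2$ (for smaller $d$ the conclusion is trivial since $c_1, c_2$ may depend on $d$), the elementary inequalities $\log(1+x) \leq x$ and $\log(1-x) \geq -2x$ on $[0,1/2]$ give $e^{-2M/(5p)} \leq (1 + \eps s)^{d/p} \leq e^{M/(10p)}$. Combined with $\widehat\psi \geq 0$ and the Fourier inversion identity $\int_\R \widehat\psi(s)\,ds = 2\pi\psi(0) = 2\pi > 0$, this produces the desired two-sided bound $c_1 \leq \int \om^\eps(y)\,dy \leq c_2$. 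The only real technical point is the Jacobian bookkeeping in this last step: the hypothesis $\eps d \lesssim 1$ is precisely what prevents $(1 + \eps s)^{d/p}$ from blowing up or collapsing in the joint regime of large $d$ and small $\eps$, which is the implicit content of the lemma.
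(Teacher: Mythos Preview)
Your argument is correct and follows essentially the same route as the paper's proof: both reduce $\int \om^\eps$ to an estimate on the $l^p$-annular volume $\nu_p\bigl((1+c\eps)^{d/p}-(1-c\eps)^{d/p}\bigr)/\eps$, and both use the hypothesis $\eps d\le 1/10$ to bound $(1\pm c\eps)^{d/p}$ between two fixed positive constants. The only difference is presentational---you carry out an explicit polar-coordinate substitution $s=(r^p-1)/\eps$, while the paper works directly with the level sets of $|y|^p$; in particular the paper does not invoke $\int\widehat\psi=2\pi\psi(0)$ for the lower bound but instead uses only $\widehat\psi\ge c$ on a small neighbourhood of the origin, which is a slightly weaker input.
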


\begin{proof} By the definition of the function $\psi$ we have $|\widehat{\psi}(t)|\geq c$ for $|t|\leq \tau$ for some constants $c>0$ and $0<\tau\leq 1$. Then by \eqref{2.6} one has
\[
\int \om_1^\eps (y)\,dy \geq \frac{c}{\eps}\, \bigl|\{y;\ 1-\tau\eps\leq |y|^p\leq 1+\tau\eps\}\bigr|=\frac{c\nu_p}{\eps} \left((1+\tau\eps)^{d/p}-(1-\tau\eps)^{d/p}\right)\geq c_1,\]
uniformly for $0<\eps\leq 1/10d$ as $p>1$. Similarly, as $0\leq \widehat{\psi}\leq 1$ and $\widehat{\psi}$ is supported on $(-2,2)$,
\[\int \om^\eps (y)\,dy \leq \frac{1}{\eps}\, \bigl|\{y;\
1-2\eps\leq |y|^p\leq 1+2\eps\} \bigr|=\frac{\nu_p}{\eps} \left((1+2\eps)^{d/p}-(1-2\eps)^{d/p}\right)\leq c_2.\]
\end{proof}

Since $\om^\eps (y)$ is invariant under reflections to the coordinate hyperplanes we have that
\[\int \chi_+(y)\om^\eps(y)\,dy = 2^{-d} \int \om^\eps(y)\,dy,\]
and the above lemma holds for $\chi_+(y)\om^\eps(y)$ as well.
\vskip0.2in
To prove Lemma \ref{lem2.1} we need the following result.
\begin{lemma}\label{lem4.1} Let $f:[-N,N]^d\to [-1,1]$ and $g:[0,\la]^d\to [-1,1]$ be given functions. Then
\begin{equation} \label{fg-version}
\int_{x,y\in\R^d}f(x)f(x+y)f(x+2y)g(y)\,dx\,dy\,\ls\,N^{d}\,\la^{d/2}\,\|g\|_{U^3},
\end{equation}
where the implicit constant depends only on $d$.
\end{lemma}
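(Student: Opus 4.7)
The plan is to prove the lemma by three successive applications of the Cauchy--Schwarz inequality, each one ``doubling'' the number of $g$-factors via the introduction of a new shift parameter $h_i$. After three iterations the eight copies of $g$ arrange themselves at the vertices of a Gowers cube of dimension three, producing $\|g\|_{U^3}^8$, while the proliferating $f$-factors are bounded using only $\|f\|_\infty\le 1$ together with support considerations. Taking eighth roots will yield $T\ls N^d\la^{d/2}\|g\|_{U^3}$.

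For the \emph{first} Cauchy--Schwarz, write $T=\int f(x)\Phi(x)\,dx$ with $\Phi(x):=\int f(x+y)f(x+2y)g(y)\,dy$, so $T^2\le\|f\|_2^2\|\Phi\|_2^2\ls N^d\|\Phi\|_2^2$. Expanding $\|\Phi\|_2^2$ by introducing the second $y$-copy $y+h_1$ and then substituting $u=x+y$, we obtain
\[
T^2\;\ls\;N^d\iiint g(y)\,g(y+h_1)\,f(u)f(u+h_1)f(u+y)f(u+y+2h_1)\,du\,dy\,dh_1.
\]
For the \emph{second} Cauchy--Schwarz, isolate the factor $f(u)$ and duplicate the $u$-variable via a new parameter $h_2$; after renaming, this produces four copies of $g$ at points $\{y+\om_1h_1+\om_2h_2:\om\in\{0,1\}^2\}$ (in a $U^2$-cube) and eight $f$-factors, picking up another factor $\ls N^d$ from $\|f\|_2^2$ and a factor $\ls\la^d$ from the effective support of the new integrations in $g$. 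For the \emph{third} Cauchy--Schwarz, repeat the maneuver once more with a parameter $h_3$, so as to produce the eight copies of $g$ at the vertices of the $U^3$-cube $\{y+\om_1h_1+\om_2h_2+\om_3h_3:\om\in\{0,1\}^3\}$ together with sixteen $f$-factors and the correct volume factors.

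At this final stage the $f$-factors are pointwise bounded by $1$; their integration, restricted to the support $[-N,N]^d$ of $f$, contributes the remaining powers of $N^d$, and the auxiliary shift integrations (restricted to where the $g$-factors are non-vanishing) contribute the powers of $\la^d$. Keeping careful track of the exponents, one arrives at $T^8\ls N^{8d}\la^{4d}\|g\|_{U^3}^8$, which is the desired estimate. The main technical obstacle, and the step requiring the most care, is the second and third Cauchy--Schwarz: the system of $f$-arguments is \emph{not} a Gowers cube (the factor $2$ in $x+2y$ breaks the symmetry), so one cannot absorb the $f$-factors into any $U^k$-norm and must instead choose the CS direction at each stage so as to preserve and enlarge the $g$-cube while dumping the $f$-factors into $L^\infty$--support bounds. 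A second bookkeeping point is to ensure that at each CS one restricts the otherwise unbounded shift variable to the support of the matching $g$-factor, which is what produces the correct $\la$ dependence on the right-hand side.
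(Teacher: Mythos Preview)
Your overall strategy---three successive Cauchy--Schwarz steps that enlarge the set of $g$-arguments into the cube $\{y+\om_1h_1+\om_2h_2+\om_3h_3:\om\in\{0,1\}^3\}$ while discarding $f$-factors via $\|f\|_\infty\le 1$ and support bounds---is exactly the paper's approach, and your first step and final target $T^8\ls N^{8d}\la^{4d}\|g\|_{U^3}^8$ are both correct.

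However, your description of the second step (and hence the third) contains a real error. After the first step you have
\[
T^2\ \ls\ N^d\int_{u,y,h_1} f(u)f(u+h_1)f(u+y)f(u+y+2h_1)\,g(y)g(y+h_1).
\]
If you ``isolate $f(u)$'' and apply Cauchy--Schwarz in $u$ alone, the squared inner integral duplicates the pair $(y,h_1)$ to an independent pair $(y',h_1')$, and the resulting four $g$-values lie at $y,\ y+h_1,\ y',\ y'+h_1'$. These are \emph{not} the vertices of a $2$-cube $\{y+\om_1h_1+\om_2h_2\}$: the required coincidence $h_1'=h_1$ has been destroyed, and no ``renaming'' can restore it. (Your phrase ``duplicate the $u$-variable'' is also not what is happening here: Cauchy--Schwarz in $u$ duplicates the inner variables $(y,h_1)$, not $u$.) The count ``eight $f$-factors'' is likewise inconsistent with any version of this step.

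The fix, which is precisely what the paper does, is to isolate the \emph{pair} $f(u)f(u+h_1)$ and apply Cauchy--Schwarz in $(u,h_1)$ jointly, after recording the constraint $|h_1|\ls\la$ coming from the support of $g$. Then only $y$ is duplicated to $y'=y+h_2$, the variable $h_1$ is shared between the two copies, and the four $g$-values land at $y,\ y+h_1,\ y+h_2,\ y+h_1+h_2$ as required. The isolated block contributes $\int_{|h_1|\ls\la}\int_u|f(u)f(u+h_1)|^2\ls N^d\la^d$, matching your exponents, and exactly four (not eight) $f$-factors survive in the integrand. The third step is analogous: isolate all four surviving $f$-factors (which now depend on $u,h_1,h_2$ but not on $y$), Cauchy--Schwarz in $(u,h_1,h_2)$, and duplicate $y$ once more to produce $h_3$. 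With these corrections your sketch becomes the paper's proof.
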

\begin{proof}
The proof involves several changes of variables and successive applications of the Cauchy-Schwarz
inequality. Set
\[
T=\int_{x,y\in\R^d}f(x)f(x+y)f(x+2y)g(y)\,dx\,dy.\]
Applying the Cauchy-Schwarz inequality in the $x$ integration to get
\[
T^2\leq N^d\int _{x}\int_y\int_{y'}f(x+y)f(x+2y)f(x+y')f(x+2y')g(y)g(y')\,dx\,dy\,dy'
\]
Use the substitution $y'=y+h$ followed by the substitution $x\to x-y$, and define
\begin{equation} \Delta_hF(x)=F(x+h)\overline{F(x)} \label{Delta-def} \end{equation}
for a generic complex valued function $F$. Then one may write
\[
T^2\leq N^d\int _{x}\int_y\int_{h}\Delta_hf(x)\Delta_{2h}f(x+y)\Delta_hg(y)\,dx\,dy\,dh.
\]
The integrals in $y,\,h$ may be restricted to a region with $|y|,\,|h|\lesssim \la$ due to the support of $g$. Then another application of Cauchy-Schwarz in the $x$ and $h$ integration gives
\[
T^4\ls N^{3d}\la^d\int _{x,h,y,y'}\Delta_{2h}f(x+y)\Delta_{2h}f(x+y')\Delta_hg(y)\Delta_{h}g(y')\,dx\,dy\,dh\,dy'
\]
Again use the substitutions $y'=y+k$ and $x\to x-y$ in turn to get
\[
T^4\ls N^{3d}\la^d\int _{x,y,k,h}\Delta_{2h}f(x)\Delta_{2h}f(x+k)\Delta_hg(y)\Delta_{h}g(y+k)dx\,dy\,dh\,dk.
\]
One final application of the Cauchy-Schwarz inequality in $x$ and $h$ and $k$ integration gives
\[
T^8\ls N^{7d}\la^{4d}\int_{x,y,h,k,y'}1_{[0,N]^d}(x)\Delta_hg(y)\Delta_{h}g(y')\Delta_h g(y+k)\Delta_hg(y'+k)dx\,dy\,dy'\,dh\,\,dk.
\]
The $x$ integration may be carried out, and the applying the substitution $y'\to y+l$ gives the final form
\eq\label{normbound}
T^8 \ls N^{8d} \la^{4d}\int_{y,h,k,l} \Delta_{h,k,l}f(y)\,dy\,dh\,dk\,dl
\ee
where $\Delta_{h,k,l}$ is well defined as the composition of the operators $\Delta_h$, $\Delta_k$, and $\Delta_l$. The integral is easily verified to be $\|g\|_{U^3}^8$, which completes the proof.
\end{proof}
\vskip0.2in
\begin{proof}[Proof of Lemma \ref{lem2.1}]
As indicated in the introduction, the right hand side of \eqref{2.11} is
to be
interpreted as \[ ||(\sigma - \omega^{\epsilon})||_{U^3} :=
\lim_{\eta \rightarrow 0}  ||(\omega^{\eta} -
\omega^{\epsilon})||_{U^3}. \] Since the integral representation of \[ \mathcal N_{\lambda}(f) - \mathcal M_{\lambda}^{\epsilon}(f)
= \lim_{\eta \rightarrow 0} \mathcal M_{\lambda}^{\eta}(f) - \mathcal
M_{\lambda}^{\epsilon}(f)  \]
is of the form \eqref{fg-version}, we may apply Lemma \ref{lem4.1}
with $g(y) =  \bigl(\omega_{\lambda}^{\eta}(y) -
\omega_{\lambda}^{\epsilon}(y) \bigr)$ to get
\[ \bigl| \mathcal N_{\lambda}(f) - \mathcal
M_{\lambda}^{\epsilon}(f) \bigr| \lesssim N^d \lambda^{d/2} \lim_{\eta
\rightarrow 0} ||\chi_{+}  \bigl(\omega_{\lambda}^{\eta} -
\omega_{\lambda}^{\epsilon}\bigr) ||_{U^{3}}.\]
Since $\omega_{\lambda}^{\epsilon}$ is a rescaled version of
$\omega^{\epsilon}:=\omega_1^{\epsilon}$, scaling properties of the $U^3$ norm imply that \[ ||(\omega_{\lambda}^{\eta} -
\omega_{\lambda}^{\epsilon}\bigr) ||_{U^{3}} = \lambda^{-d} \lambda^{d/2}
|| \bigl( \omega^{\eta} - \omega^{\epsilon}\bigr)||_{U^3},  \]
which leads to the claimed upper bound.
\end{proof}
\vskip0.2in
Next we turn to the proof of Lemma \ref{lem2.2}. In what follows we
assume that $\la$ and $N\gg\la$ are fixed, and $f$ is the
characteristic function of a set $A\subset[-N,N]^d$ with measure $\de
N^d$. First we need an estimate for one-dimensional scalar oscillatory integrals of the following type.

\begin{lemma}\label{lem4.2} Let $1 < p < \infty$, $p\neq 2$. For a
  smooth cut-off function $\phi$ on $\mathbb R$, let $\phi_+$ be its restriction to the positive real numbers and define the integral
\eq\label{4.2.1}
I(t):=\int_{y,h,k,l \in \mathbb R} \tr_{h,k,l} \left(\phi_+(y)e^{it|y|^p}\right)\,dy\,dh\,dk\,dl,
\ee
where $\Delta_{h,k,l}$ is defined via iterated compositions of
$\Delta_h$ as described in \eqref{Delta-def} and the proof of Lemma
\ref{lem4.1}.
Then there exists a constant $r=r(p)>0$ such that
\eq\label{4.2.2}
|I(t)|\leq C_p |t|^{-\frac{1}{r}},\quad \textit{for}\ |t|\geq 1.
\ee
One may take $r(p)=p+1$ for $1<p<2$, and $r(p)=2p-1$ for $p>2$. The
constant $C_p$ is finite in the indicated range of $p$, and tends to
infinity as $p \rightarrow 1$ or $2$.
\end{lemma}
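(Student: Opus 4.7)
The plan is to treat $I(t)$ as a 4-dimensional oscillatory integral and extract decay via van der Corput's method. First I would expand the iterated finite-difference operator $\Delta_{h,k,l}$ applied to $F(y) = \phi_+(y)e^{it|y|^p}$, writing the integrand as an amplitude $\Psi(y,h,k,l)$---a product of eight $\phi_+$ factors evaluated at the points $y + \nu\cdot(h,k,l)$ for $\nu \in \{0,1\}^3$---times a complex exponential with phase
\[
t\,\Phi(y,h,k,l) \;=\; -t\,\Delta_h^+ \Delta_k^+ \Delta_l^+(y^p),
\]
the negative of a mixed third-order forward difference of $y^p$. Taylor expanding for $h,k,l$ small relative to $y$ gives
\[
\Phi(y,h,k,l)\;=\;-p(p-1)(p-2)\,y^{p-3}\,hkl \;+\; O\bigl((|h|+|k|+|l|)^4\bigr),
\]
whose leading coefficient is non-zero precisely because $p\neq 1,2$. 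This is where the exclusion $p\neq 2$ enters and it also explains the blow-up $C_p \to \infty$ as $p \to 2$.

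The key analytic input is van der Corput's lemma in the $y$-variable for fixed $(h,k,l)$. Computing $\partial_y^m \Phi = (-1)^m p(p-1)\cdots(p-m+1)\,\Delta_h^+ \Delta_k^+ \Delta_l^+(y^{p-m})$, and choosing $m$ so that $y^{p-m}$ has a convex sign-definite third derivative, one obtains on the support of $\Psi$ a lower bound of the form $|\partial_y^m\Phi| \gtrsim_p |hkl|$. Van der Corput then yields
\[
\Bigl|\int \Psi(y,h,k,l)\, e^{it\Phi(y,h,k,l)}\,dy \Bigr|\;\lesssim\;\bigl(|t|\,|hkl|\bigr)^{-1/m},
\]
to be compared against the trivial pointwise bound and integrated against a dyadic decomposition in $(h,k,l)$ and $y$.

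The specific exponents $r(p) = p+1$ for $p \in (1,2)$ and $r(p) = 2p-1$ for $p > 2$ should emerge from optimizing this trade-off, together with a dyadic decomposition of the $y$-scale (necessary because $\phi_+$ may have support down to $y = 0$, where $y^p$ has either unbounded, for $p<2$, or vanishing, for $p>2$, higher derivatives). The dichotomy in $r(p)$ reflects whether the binding contribution comes from $y \approx 0$ or from $|hkl|\approx 0$. The main obstacle is precisely the control of these degenerate regions: near the coordinate hyperplanes $\{h=0\}\cup\{k=0\}\cup\{l=0\}$, $\Phi$ vanishes identically in $y$ so van der Corput in $y$ gives no gain, and one must fall back on either oscillation in a different variable or on a trivial volume estimate of the thin region. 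Near $y = 0$, the classical one-dimensional bound $\int_0^1 \phi(y)\,e^{ity^p}\,dy = O(|t|^{-1/p})$ replaces the Taylor-expansion reasoning. Orchestrating the cutoffs and book-keeping to produce the advertised exponents $r(p)$---while allowing constants to blow up as $p\to 1$ or $p\to 2$ through the factor $p(p-1)(p-2)$---is the heart of the argument.
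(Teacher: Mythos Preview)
Your plan is a reasonable oscillatory-integral strategy and correctly isolates the mechanism (the factor $p(p-1)(p-2)$) responsible for the restriction $p\neq 1,2$ and for the blow-up of $C_p$. However, it misses the paper's key first simplification and, as a result, is both harder to execute and vague on exactly how the advertised exponents arise.

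The paper does \emph{not} treat $I(t)$ as a four-variable oscillatory integral with a third-order difference phase. Instead it substitutes $y'=y+h$ and observes that
\[
I(t)\;=\;\int_{k,l}\,\bigl|I_{k,l}(t)\bigr|^{2}\,dk\,dl,\qquad
I_{k,l}(t)=\int \Delta_{k,l}\phi_+(y)\,e^{\,it\,\psi_{k,l}(y)}\,dy,
\]
with the \emph{second}-order difference phase $\psi_{k,l}(y)=y^p+(y+k+l)^p-(y+k)^p-(y+l)^p$. This buys an entire order of differencing for free: by Taylor's integral remainder,
\[
\psi_{k,l}'(y)=p(p-1)(p-2)\,kl\int_{[0,1]^2}(y+uk+sl)^{p-3}\,du\,ds,
\]
whose nonvanishing needs only $p\neq 1,2$. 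The paper then introduces a single scale $\eta$, excises the $O(\eta)$-neighbourhoods of $\{k=0\}$, $\{l=0\}$ and of the zeros of $y,\,y+k,\,y+l,\,y+k+l$ (each contributing $O(\eta)$ trivially), and on the remainder performs one integration by parts. The two resulting terms $\chi'/\psi'$ and $\chi\psi''/(\psi')^{2}$ are $O(|t|^{-1}\eta^{-p})$ and $O(|t|^{-1}\eta^{-2p+2})$; balancing against $O(\eta)$ gives precisely $r(p)=\max(p+1,\,2p-1)$ with no further casework.

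By contrast, your route keeps the third-order difference phase and calls van der Corput of an unspecified order $m$. The lower bound on $|\partial_y^m\Phi|$ then carries, via the integral representation of a third difference, the additional factor $(p-m)(p-m-1)(p-m-2)$, forcing new degeneracies at further integer values of $p$; and the claimed estimate $(|t|\,|hkl|)^{-1/m}$ suppresses the $y$-scale dependence that you yourself note must be handled dyadically. Extracting exactly $p+1$ and $2p-1$ from a three-parameter dyadic decomposition combined with an $m$th-order van der Corput bound is bookkeeping you have not supplied. The paper's squaring trick replaces all of that with a single integration by parts against a two-parameter phase.
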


\begin{proof} Replacing $y+h$ by a new variable $y'$, the integral $I(t)$ may be rewritten as
\[I(t)=\int |I_{k,l}(t)|^2\,dk\,dl,\]
where
\[I_{k,l}(t)=\int \tr_{k,l}\phi_+(y)\,e^{it\psi_{k,l}(y)}\,dy,\]
with
\[\tr_{k,l}\phi_+(y)=\phi_+(y)\phi_+(y+k)\phi_+(y+l)\phi_+(y+k+l),\]
\[\psi_{k,l}(y)=y^p+(y+k+l)^p-(y+k)^p-(y+l)^p.\]
The reason we can write $\psi_{k,l}(y)$ in this form is that $y,\,y+k,\,y+l,\,y+k+l$ are all positive on the support of $\tr_{k,l}\phi_+$.
\vskip0.2in
It is clear that $I_{k,l}(\eta)$ is uniformly bounded, hence $I(t)$
receives small contribution from regions where at least one of the
integration variables $k,l$ is small. For a small
parameter $0<\eta<1$ to be chosen later, we may therefore write
\[I(t)=\int_{|k|,|l|\geq\eta} |I_{k,l}(t)|^2\,dk\,dl + O(\eta).\] We now estimate the integral $I_{k,l}(t)$ for fixed $k,l$ assuming $|k|,|l|\geq\eta$. Introducing a smooth partition of unity, we have
$\,I_{k,l}(t)=I'_{k,l}(t)+J_{k,l}(t),\,$ where the domain of integration of $I'_{k,l}(t)$ ranges over those $y$ for which at least one of the quantities $y,\ y+k,\ y+l,\,y+k+l$ is $O(\eta)$. Thus $I'_{k,l}(t)=O(\eta)$.
\vskip0.2in
For $J_{k,l}(t)$ one may write, using Taylor's remainder formula
\[
\psi_{k,l}(y)= klp(p-1) \int_{[0,1]^2} (y+uk+sl)^{p-2}\,du\,ds.
\]
Therefore its derivative is given by
\[
\psi'_{k,l}(y)= klp(p-1)(p-2) \int_{[0,1]^2} (y+uk+sl)^{p-3}\,du\,ds.
\]
By our assumptions, we have that
\[\eta\ls y+uk+sl\ls 1,\] uniformly for $0\leq u,s\leq 1$, and also that $|k|,|l|\gs \eta$. Thus for $p>1$, $p\neq 2$,
\[|\psi_{k,l}'(y)|\gs \eta^{p-1},\]
with an implicit constant independent of $k$ and $l$. Then, writing $\psi=\psi_{k,l}$ and $\chi$ for the amplitude, integration by parts yields
\begin{align*}
J_{k,l}(t)&= \int_{\R^d}\frac{d}{dy} \left(e^{it\psi(y)}\right)\ \frac{\chi(y)}{it\psi'(y)}\,dy
=-\frac{1}{it} \int_{\R^d} e^{it\psi(y)}\ \frac{d}{dy} \left(\frac{\chi(y)}{\psi'(y)}\right)\,dy\\
&= \frac{1}{it} \int_{\R^d} e^{it\psi(y)}\ \left( \frac{\chi'}{\psi'} + \frac{\chi\psi''}{\psi'^2}\right)\,dy.
\end{align*}
Here we have used the support properties of $\chi$ in the form  $||\chi||_{\infty}=O(1)$ and
$||\chi'||_{\infty} = O(\eta^{-1})$. Therefore
\[|J_{k,l}(t)|\ls |t|^{-1}\,(\eta^{-p}+\eta^{-2p+2})\leq |t|^{-1} \eta^{-r'(p)},\]
with $r'(p)=\max (p,2p-2)>0$. This implies that $I_{k,l}(t)=O(\eta)+O(|t|^{-1}\eta^{-r'_p})$, choosing $\eta:= |t|^{-\frac{1}{r_p+1}}$ yields
\eq\label{4.2.3}
|I(t)|\ls |t|^{-\frac{1}{r_p}},
\ee
with $r(p)=r'(p)+1=\max (p+1,2p-1)$ and the lemma follows.
\end{proof}

\bigskip

\begin{proof}[Proof of Lemma \ref{2.2}]
By the triangle inequality \[ ||\sigma -
\omega^{\epsilon}||_{U^3(\mathbb R^d)} \leq \sum_{i=1}^{2^d} ||\chi_i(\sigma -
\omega^{\epsilon})||_{U^3(\mathbb R^d)},\] where $\chi_i$ is the
indicator function of the $i$th orthant of $\mathbb
R^d$. Since both $\sigma$ and $\omega^\epsilon$ are invariant under
reflections about the coordinate hyperplanes, it suffices to estimate
$||\chi_{+}(\sigma - \omega^{\epsilon})||_{U^3}$, where $\chi_{+}$ is
the indicator function of the positive orthant. Recalling that $\omega^{\eta} - \omega^{\epsilon}$ is compactly supported,
say on $[-C, C]^d$, let us fix a smooth cutoff function
$\Phi(y)=\phi^{\otimes d}(y)$ where $\phi$ is a smooth bump function
supported on an interval of the form $[-2C,2C]$ and identically 1 on
the middle half of it. Then,
\begin{equation} \label{eqss}
\|\chi_+(\om^{\eta}-\om^\eps)\|_{U^3} = \|\Phi_+(y)\int_t(\psi(\eta
t))-\psi(\eps t))e^{i(|y|^p-1)t}dt||_{U^3(dy)},
\end{equation}
with $\Phi_+(y)=\chi_+(y)\Phi(y)$.
%Let $\eta\ll\eps$ be a small parameter. We need to estimate
%\[
%||\chi_+(\om^{\eta}_\la-\om^\eps_\la)||_{U^3}
%\]
%independent of $\eta$. By \eqref{2.7}, we can find a smooth cutoff function $\Phi(y)=\phi^{\otimes d}(y)$ where $\phi$ is a smooth bump function  supported on an interval of the form $[-C,C]$ for some constant $C>0$, such that writing $\Phi_+(y)=\chi_+(y)\Phi(y)$,
%\begin{equation} \label{eqss}
%\|\chi_+(\om^{\eta}-\om^\eps)\|_{U^3}
%& =\|\la^{-d+p}\Phi_+(y/\la)\int_t(\psi(\eta \la^p t))-\psi(\eps \la^p t))e^{i(|y|^p-\la^p)t}dt\|_{U^3(dy)}\\
% = \|\Phi_+(y/\la)\int_t(\psi(\eta t))-\psi(\eps t))e^{i(|y/\la|^p-1)t}dt||_{U^3(dy)}
% & = \|\la^{-d/2}\Phi_+( y)\int_t(\psi(\eta t))-\psi(\eps t))e^{i(|y|^p-1)t}dt||_{U^3(dy)}.
%\end{split}
%\end{equation}
%The first equality is from the substitution $\la^pt\to t$. The second follows from the fact that $\|F(y/\la)\|_{U^3(dy)}=\|\la^{d/2}F(y)\|_{U^3(dy)},$  obtained by scaling. \\
Applying Minkowski's inequality to the right hand side of the last equation \eqref{eqss} this is further estimated by
\begin{equation} \label{total integral}
\int_t|\psi(\eta  t))-\psi(\eps  t)|\,||\Phi_+(y)e^{it\,|y|^p}||_{U^3(\R^d)}dt.
\end{equation}
Note that as $\Phi_+(y)e^{it\,|y|^p}=\prod_{i=1}^d \phi_+(y_i)e^{it\,y_i^p}$, we have
\[
||\Phi_+(y)e^{it\,|y|^p}||_{U^3(\mathbb R^d)}\,=\, ||\phi_+(y)e^{it\,y^p}||_{U^3(\R)}^d,
\]
where the one-dimensional integrals
\[
||\phi_+(y)e^{it\,y^p}||_{U^3(\R)}^8=\int_{y,h,k,l\in\R}\left(\Delta_{h,k,l}\phi_+(y)e^{it|y|^p}\right)
\,dy\,dh\,dk\,dl
\]
are estimated in Lemma \ref{lem4.2}. Thus, we have for $|t|\geq 1$
\[
||\Phi_+(y)e^{it\,|y|^p}||_{U^3(y)}\,\ls\,|t|^{-\frac{d}{8r}},
\]
with $r=r(p)$ given in \eqref{4.2.2}. Inserting this bound into
\eqref{total integral}, we complete the estimation as follows,
\begin{align*}
\int_t|\psi(\eta t))-\psi(\eps t)|\,|t|^{-\frac{d}{8r}}\,dt & \leq
\int_t \bigl[|\psi(\eta t)| + | \psi(\epsilon t)|\bigr] t^{-d/8r}\,dt \\
& \lesssim\,\eta^{\frac{d}{8r}-1}  + \eps^{\frac{d}{8r}\,-1} \lesssim \epsilon^{\frac{d}{8r}-1}
\end{align*}
which is bounded uniformly in $\eta$ provided that $\eta \ll \epsilon$
and $d>8r$.
\end{proof}

%The proof of Proposition \ref{2.2} remains.
%We have
%\[
%\NN_\la(f)-\MM^\eps_\la(f)=\int_{\R^d}\int_{\R_+^d} f(x)f(x+y)f(x+2y)(\si_\la(y)-\om^\eps_\la(y))\,dx\,dy.\]
%This may be written as \[
%\lim_{\eta\to0} \int_{\R^d}\int_{\R_+^d} f(x)f(x+y)f(x+2y)(\om^{\eta}_\la(y)-\om^\eps_\la(y))\,dx\,dy.\]
%The function $\chi_+(\om^{\eta}_\la-\om^\eps_\la)$ is supported on $[0,\la']$ with $\la'\approx\la$. Then by Lemma \ref{lem2.1}
%\[
% \left|\int_{\R^d}\int_{\R_+^d} f(x)f(x+y)f(x+2y)(\om^{\eta}_\la(y)-\om^\eps_\la(y))\,dx\,dy\,\right|\ls N^d\la^{\frac{d}{2}}\|\chi_+(\om^{\eta}_\la-\om^\eps_\la)\|_{U^3}.\]
%and moreover by Lemma \ref{lem2.2} we have that
%\[
%\|\chi_+(\om^{\eta}_\la-\om^\eps_\la)\|_{U^3}\ls\la^{-\frac{d}{2}}\eps^{\ga_p d-1},\]
%with $\ga_p=\frac{1}{8r_p}$. This proves Proposition \ref{2.2}

\section{A result from time-frequency analysis} \label{time-frequency
  section} Here we will prove Proposition \ref{tf-prop} by using the main
result of \cite{MTT}. The necessary verifications of the hypotheses of
\cite{MTT} will be done subsequently.
\begin{proof}[Proof of Proposition \ref{tf-prop}]
By the Cauchy-Schwarz inequality and support restrictions on $f$,
\begin{align}\label{2.15}
|\mathcal{E}_{\la_j}(f)|^2\,&\ls\,N^d\,\iiint_{(\R^d)^3} f(x+y)f(x+z)f(x+2y)f(x+2z)\,k_j^\eps(y)k_j^\eps(z)\,dy\,dz\,dx\nonumber\\
&=\,N^d\,\iiint_{(\R^d)^3} f(x)f(x+z-y)f(x+y)f(x+2z-y)\,k_j^\eps(y)k_j^\eps(z)\,dy\,dz\,dx.
\end{align}
Summing \eqref{2.15} for $1\leq j\leq J$ one has
\eq\label{2.16}
\sum_{j=1}^J |\mathcal E_{\la_j}(f)|^2\,\ls\,\,N^d\,\int_{\R^d} f(x)\, \mathcal{K}^\eps_J(f,f,f)(x)\,dx,
\ee
where the trilinear operator $\mathcal K_J^{\epsilon}$ on the right side is given by
\eq\label{2.17}
\mathcal{K}_J^\eps (f_1,f_2,f_3)(x):=\int\int_{(\R^d)^2} f_1(x+z-y)f_2(x+y)f_3(x+2z-y)K_J^\eps(y,z)\,dy\,dz,
\ee
with integration kernel
\begin{equation}\label{2.18}
K_J^\eps(y,z)=\sum_{j=1}^J k_j^\eps (y)k_j^\eps (z), \quad \text{
  where } \quad k_j^{\epsilon}(y) = \omega_{\lambda_j}(y) -
c_1(\epsilon) \omega_{\lambda_j}(y).
\end{equation}
We justify in Lemma \ref{lem5.1} below that this is a
Calder\'on-Zygmund kernel.  Lebesgue mapping properties of associated
multilinear operators of the type given in \eqref{2.17} were studied by Muscalu, Tao and Thiele \cite{MTT}. Indeed, by Theorem 1.1 in \cite{MTT} one has the estimate
\begin{equation}\label{2.19}
\|\mathcal{K}_J^\eps (f,f,f)\|_{L^{4/3}(\R^d)}\,\leq\,C_\eps\,\|f\|^3_{L^4(\R^d)},
\end{equation}
with a constant $C_\eps >0$ independent of $J$ and the sequence
$\la_1<\ldots <\la_J$. Then
\begin{align}\label{2.20}
\sum_{j=1}^J |\mathcal E_{\la_j}(f)|^2\,&\ls\,\,N^d\,\int_{\R^d} f(x) \mathcal{K}^\eps_J(f,f,f)(x)\,dx\nonumber\\
&\leq\,C_\eps\,N^d\, \|\mathcal K_J^{\epsilon}(f, f,
f)\|_{L^{4/3}(\R^d)}\,\|f\|_{L^4(\R^d)} \leq C_{\epsilon} N^d ||f||_4^4,
\end{align}
as claimed.
\end{proof}

In preparation for Lemma \ref{lem5.1}, let us begin by rewriting \eqref{2.20} as
\eq\label{5.1}
\mathcal{K}_J^\eps (f_1,f_2,f_3)(x):=\iiint_{(\R^d)^3} e^{ix\cdot(\xi_1+\xi_2+\xi_3)}\widehat{f_1}(\xi_1)\widehat{f_2}(\xi_2)
\widehat{f_3}(\xi_3)m(\xi_1,\xi_2,\xi_3)\,d\xi_1\,d\xi_2\,d\xi_3,
\ee
where
\begin{align}
m(\xi_1,\xi_2,\xi_3) &=\iint_{(\R^d)^2} K_J^\eps
(y,z)e^{-iy\cdot(\xi_1-\xi_2+\xi_3)}e^{ix\cdot(\xi_1+2\xi_3)}dy\,dz
\nonumber \\
&=
\widehat{K_J^\eps}(-\xi_1+\xi_2-\xi_3,\xi_1+2\xi_3) \label{5.2}\end{align}
Set \begin{align}
\Gamma &=\{(\xi_1,\xi_2,\xi_3,\xi_4) \in (\mathbb R^d)^4 :\xi_1+\xi_2+\xi_3+\xi_4=0\},
\text{ and } \label{def-Gamma} \\
\Gamma' &= \{ (\xi_1,\xi_2,\xi_3,\xi_4) \subset\Gamma :
\xi_1-\xi_2+\xi_3=0, \; \xi_1+2\xi_3=0, \label{def-Gamma'}
\end{align}
so that $\text{dim}(\Gamma)=3d$ and $\text{dim}(\Gamma')=d$. By taking the Fourier
transform one may re-express $\mathcal K_J^{\epsilon}$ as a
multiplier, namely,
\eq\label{5.3}
\widehat{\mathcal{K}}_J^\eps (f_1,f_2,f_3)(-\xi_4):=\int_\Gamma \widehat{f_1}(\xi_1)\widehat{f_2}(\xi_2)
\widehat{f_3}(\xi_3)m(\xi_1,\xi_2,\xi_3,\xi_4)\,d\xi_1\,d\xi_2\,d\xi_3,
\ee
identifying the operator $\mathcal{K}_J^\eps$ with the ones studied in
\cite{MTT}. The only difference is that the functions $f_i$ are now
defined on $\R^d$ instead of on $\R$; however this does not affect the
arguments given there. To clarify, $\Gamma'$ is a graph over $d$
of the canonical variables, and $\mathcal K_J^{\epsilon}$ generates a
4-linear form, so in the notation of \cite{DPT}, $k=d$ and $n =
4$. Thus the rank of the operator as described in this paper is $m =
k/d = 1$, an integer $< 2 = n/2$. As has been pointed out in page 295
of \cite{DPT}, the higher-dimensional adaptation of the main result of
\cite{MTT} in this setting of integral rank is fairly straightforward,
provided certain requirements on the multiplier is met. Thus, in order
to apply the main result of \cite{MTT} one needs to establish certain
growth and differentiability properties of the multiplier
$m(\xi)$. This is the goal of the following lemma.

\begin{lemma}\label{lem5.1} Given an integration kernel $K_{J}$ as in \eqref{2.18}, with the
  summands obeying the cancellation condition \eqref{2.14.5}, let $m(\xi)$ be the
  associated multiplier defined in \eqref{5.2}, where
  $\xi=(\xi_1,\ldots,\xi_3) \in \mathbb R^{3d}$ is a coordinate system
  identifying the subspace $\Gamma$. Then for any multi-index $\al
  \in \mathbb Z_{+}^{3d}$ one has the estimate
\eq\label{5.4}
|\partial_{\xi}^\al m(\xi)|\,\leq C_{\al,\eps}\, (dist(\xi,\Gamma'))^{-|\al|}.
\ee
Here $\Gamma$ and $\Gamma'$ are as in \eqref{def-Gamma} and \eqref{def-Gamma'}.
\end{lemma}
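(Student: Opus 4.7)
The plan is to reduce the statement to a Mihlin-type estimate for $\widehat{K_J^\eps}$ in the variables $\eta := -\xi_1+\xi_2-\xi_3$ and $\zeta := \xi_1+2\xi_3$. The linear map $(\xi_1,\xi_2,\xi_3) \mapsto (\eta,\zeta)$ from $\Ga \cong \R^{3d}$ onto $\R^{2d}$ is a surjection whose kernel is precisely $\Ga'$, so $\mathrm{dist}(\xi,\Ga') \sim |\eta|+|\zeta|$ with constants depending only on $d$. Via the chain rule it therefore suffices to show, for every pair of multi-indices $\beta,\ga \in \Z_+^d$,
\[ \bigl| \partial_\eta^\beta \partial_\zeta^\ga \widehat{K_J^\eps}(\eta,\zeta) \bigr| \leq C_{\beta,\ga,\eps}\,(|\eta|+|\zeta|)^{-|\beta|-|\ga|}. \]

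I would next exploit the tensor-product structure. The identity $K_J^\eps(y,z) = \sum_j k_j^\eps(y) k_j^\eps(z)$ combined with the $L^1$-preserving scaling $k_j^\eps(y) = \la_j^{-d} k^\eps(y/\la_j)$, where $k^\eps := \om^\eps - c_1(\eps)\om$, yields on the Fourier side
\[ \widehat{K_J^\eps}(\eta,\zeta) = \sum_{j=1}^J h(\la_j \eta)\,h(\la_j \zeta), \qquad h := \widehat{k^\eps}. \]
Two properties of $h$ drive everything. First, since $\om^\eps$ and $\om$ are smooth and supported in a compact annulus of the form $\{|y|^p \in [1-C\eps, 1+C\eps]\}$ bounded away from $0$ and $\infty$, the function $k^\eps$ is smooth and compactly supported, so $h$ is Schwartz and satisfies $|\partial^\al h(\tau)| \leq C_{N,\al,\eps}(1+|\tau|)^{-N}$ for every $N$. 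Second, the cancellation \eqref{2.14.5} forces $h(0) = 0$, so Taylor's theorem gives $|h(\tau)| \leq C_\eps \min(|\tau|,1)$, with an analogous bound for low-order derivatives if needed.

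The main step is a dyadic splitting at $\la_j \sim 1/r$ with $r := |\eta|+|\zeta|$. Differentiation produces $\partial_\eta^\beta \partial_\zeta^\ga \bigl[h(\la_j\eta) h(\la_j\zeta)\bigr] = \la_j^{|\beta|+|\ga|}(\partial^\beta h)(\la_j\eta)(\partial^\ga h)(\la_j\zeta)$. On the low-frequency block $\la_j r \leq 1$, boundedness of the derivatives of $h$ controls each term by $C_\eps\,\la_j^{|\beta|+|\ga|}$, where in the borderline case $|\beta|+|\ga|=0$ the vanishing of $h$ at the origin is applied in both factors to gain the extra $\la_j^2 |\eta||\zeta|$; the lacunarity $\la_{j+1}\geq 2\la_j$ turns the sum into a geometric one dominated by its largest term, which is $\lesssim r^{-|\beta|-|\ga|}$. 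On the high-frequency block $\la_j r > 1$, Schwartz decay applied to whichever factor has argument comparable to $\la_j r$, combined with boundedness of the other factor, bounds each term by $C_N\,r^{-N}\la_j^{|\beta|+|\ga|-N}$; choosing $N > |\beta|+|\ga|$ again gives a geometric sum dominated by its smallest term, contributing $\lesssim r^{-|\beta|-|\ga|}$. The principal subtlety I anticipate is keeping the constants genuinely independent of $J$ and of the specific sequence $\{\la_j\}$ — an essential feature for the subsequent appeal to \cite{MTT}. This independence is delivered precisely by the lacunary structure, which collapses each dyadic block to its extremal term, together with the observation that the bounds on $h$ and its derivatives depend only on $\eps$ and the differentiation order.
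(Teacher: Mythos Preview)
Your proposal is correct and follows essentially the same route as the paper: both reduce via the linear change of variables $(\xi_1,\xi_2,\xi_3)\mapsto(\eta,\zeta)$ to a Mihlin-type estimate on $\widehat{K_J^\eps}(\eta,\zeta)=\sum_j h(\la_j\eta)h(\la_j\zeta)$, then combine the cancellation $h(0)=0$ with decay of $h$ and lacunarity of $\{\la_j\}$ to sum. The paper phrases the summation via the compact bound $\sum_j \mu_j^k(1+\mu_j)^{-k-1}\lesssim 1$ for lacunary $\mu_j=\la_j|\eta|$ (or $\la_j|\zeta|$), whereas you phrase it as a low/high dyadic split at $\la_j\sim 1/(|\eta|+|\zeta|)$; these are the same argument.

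One small caveat: your assertion that $k^\eps$ is smooth (hence $h$ Schwartz) is not literally true for general $p$, since $|y|^p=\sum_i |y_i|^p$ is only $C^{\lceil p\rceil -1}$ at the coordinate hyperplanes. The paper is equally informal on this point (cf.\ \eqref{5.6} ``for any $k\in\N$''), and as the remark following the lemma notes, only a fixed finite order of decay is actually needed for the application of \cite{MTT}, so the argument is unaffected.
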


{\em{Remark: }} The crucial point here is that the constant
$C_{\al,\eps}$ is independent of $J$ and the lacunary sequence
$\la_1<\ldots <\la_J$. Also the above estimate is needed just up to
some fixed finite order. Once this is established our main result
Theorem \ref{thm2.2} follows as explained at the end of Section 2.

\begin{proof} Since $m$ is essentially $\widehat{K_J^{\epsilon}}$
  composed with a linear transformation, we study the latter function
  in detail. The relation \eqref{2.18} implies that
\[
\widehat{K_J^\eps}(\eta,\zeta)=\sum_{j=1}^J\widehat{k^\eps_j}(\eta)
\widehat{k^\eps_j}(\zeta)
\]
and
\[
\widehat{k^\eps_j}(\eta)=\int_{y\in\R^d} e^{iy\cdot\eta}
\left(\om^\eps_{\la_j}(y)-c_1(\eps)\om_{\la_j}(y)\right)\,dy=
\widehat{\om^\eps} (\la_j\eta)-c_1(\eps)\widehat{\om}(\la_j\eta).
\]
Let us recall the definitions
$\om_1^\eps(y)=\eps^{-1}\widehat{\psi}((|y|^p-1)/\eps)$, where $\om(y)=\om_1^1(y)$ with $\widehat{\psi}$ is a compactly supported smooth function. Therefore for all multi-indices $\al$
\eq\label{5.5}
|\partial_\eta^\al \widehat{\om_1^\eps}(\eta)|\,\leq C_\al.
\ee
Integrating by parts $k$ times in the integral expression for
$\partial_{\eta}^{\alpha} \omega_1^{\epsilon}$ we also obtain
\eq\label{5.6}
|\partial_\eta^\al\, \widehat{\om_1^\eps}(\eta)|\,\leq C_{\al,k}\,\eps^{-|\al|}\,(1+|\eta|)^{-k}.
\ee
Thus for any $k\in\N$
\eq\label{5.7}
|\partial_\eta^\al\,\widehat{k^\eps_j}(\eta)|\,\leq\,
C_{\al,k}\,\eps^{-|\al|}\,(1+|\la_j\eta|)^{-k}.
\ee
The cancellation property \eqref{2.14.5} gives that
$\widehat{k^\eps_j}(0)=0$, which leads to the additional pointwise estimate
\eq\label{5.8}
|\widehat{k^\eps_j}(\eta)|\,\leq\,C\,\la_j |\eta|.
\ee
This implies that $\widehat{K_J^\eps}(0,\zeta)=\widehat{K_J^\eps}
(\eta,0)=0$ and
\eq\label{5.9}
|\widehat{K_J^\eps}(\eta,\zeta)|\,\lesssim\,\sum_{j\leq J} \min\, \left(\la_j|\eta|,\frac{1}{\la_j|\eta|}\right)\,\lesssim\,1
\ee
as the sequence $\mu_j:=\la_j|\eta|$ is lacunary and  $\|\widehat{k^\eps_j}\|_\infty \lesssim 1$. This shows that the multiplier $m(\xi)$ defined in \eqref{5.3} is bounded. To estimate its partial derivatives we apply \eqref{5.7} with $k=|\al|+|\be|+1$ to write
\begin{align*}
|\partial_\eta^\al \partial_\zeta^\be\ \widehat{K_J^\eps}(\eta,\zeta)|\,&\leq\, C_{\al,\be,\eps}\,\sum_{j\leq J}\,\la_j^{|\al|+|\be|}\, \min\left( (1+|\la_j\eta|)^{-|\al|-|\be|-1},(1+|\la_j\zeta|)^{-|\al|-|\be|-1}\right)\\
&\leq\,C_{\al,\be,\eps}\,\min\,(|\eta|^{-|\al|-|\be|},|\zeta|^{-|\al|-|\be|})\,
\leq\,C_{\al,\be,\eps}\,(|\eta|+|\zeta|)^{-|\al|-|\be|}.\nonumber
\end{align*}
Here we have used the fact that
\[\sum_{j\leq J} \mu_j^k (1+\mu_j)^{-k-1}\leq \sum_{j:\,\mu_j\leq 1}\mu_j +\sum_{j:\,\mu_j\geq 1} \mu_j^{-1}\,\leq\,C,\]
for the lacunary sequences $\mu_j:=\la_j|\eta|$ and
$\mu_j:=\la_j|\zeta|$ with $k=|\al|+|\be|\geq 1$. By \eqref{5.2} this leads to the estimate
\[|\partial_\xi^\al\, m(\xi)|\,\leq\,C_{\al,\eps}\,(|-\xi_1+\xi_2-\xi_3|^{-|\al|}+|\xi_1+2\xi_3|^{-|\al|})
\,\leq\,C'_{\al,\eps}\,dist(\xi,\Gamma')^{-|\al|}.
\]
for $\xi=(\xi_1,\xi_2,\xi_3)\in\R^{3d}\simeq\Gamma$ and any
multi-index $\al\in\Z_+^{3d}$. This proves Lemma \ref{lem5.1}.
\end{proof}

\vspace{.5in}

\vskip0.2in
\noindent \author{\textsc{Brian Cook}}\\
University of Wisconsin, Madison, USA. \\
Electronic address: \texttt{bcook@math.wisc.edu}
\vskip0.2in
\noindent \author{\textsc{Akos Magyar}}\\
University of Athens, Georgia, USA. \\
Electronic address: \texttt{magyar@math.uga.edu}
\vskip0.2in
\noindent \author{\textsc{Malabika Pramanik}}\\
University of British Columbia, Vancouver, Canada. \\
Electronic address: \texttt{malabika@math.ubc.ca}

\end{document}